\shorttitle{Robust Scheduling} % insert short title here for use in running head
\newcommand{\yuan}[1]{{\color{black}#1}}
\newcommand{\ramtin}[1]{{\color{black}#1}}
\newcommand{\yz}[1]{{\color{black}#1}}
\newcommand{\veps}{\varepsilon}
\newcommand{\eps}{\epsilon}
\newcommand{\EE}{\mathbb{E}}
\newcommand{\PP}{\mathbb P}
\newcommand{\wE}{\widetilde{E}}
\newcommand{\bE}{\bar{E}}
\newcommand{\bOne}{\boldsymbol{1}}
\newcommand{\tB}{\tilde{B}}
\newcommand{\tE}{\tilde{E}}
\begin{document}

\title{Robust Scheduling for Flexible \\Processing Networks} % insert title - use \\ if it requires more than one line.

\authorone[University of California, Santa Barbara]{Ramtin Pedarsani} % Affiliation is just the name of your university or institution
\authortwo[University of California, Berkeley]{Jean Walrand}
\authorthree[Columbia University]{Yuan Zhong}

\addressone{Department of Electrical and Computer Engineering, UC Santa Barbara, Santa Barbara, CA 93106, USA. Email: ramtin@ece.ucsb.edu}

\addresstwo{257 Cory Hall, Department of Electrical Engineering and Computer Sciences, UC Berkeley, Berkeley, CA 94720, USA. Email: walrand@berkeley.edu}

\addressthree{500 W. 120th St., Mudd 344, New York, NY 10027, USA. Email: yz2561@columbia.edu}
%\address{Department of Electrical Engineering and Computer Sciences, University of California, Berkeley, CA, 94720, USA}
%\address{Industrial Engineering and Operations Research Department, Columbia University, 500 West 120th Street, New York, NY, 10027}

%\emailone{ramtin@eecs.berkeley.edu} % Your postal address goes here.
%\emailtwo{walrand@berkeley.edu}
%\emailthree{yz2561@columbia.edu}

\begin{abstract}
\yuan{Modern processing networks often consist of 
heterogeneous servers with widely varying capabilities, 
and process job flows with complex structure and requirements. 
A major challenge in designing efficient scheduling policies 
in these networks is the lack of reliable estimates of system parameters, 
and an attractive approach for addressing this challenge is to design robust policies, 
i.e., policies that do not use system parameters such as arrival and/or service rates 
for making scheduling decisions. 

In this paper, we propose a general framework for the design of robust policies. 
The main technical novelty is the use of a stochastic gradient projection method 
that reacts to queue-length changes in order to find a balanced allocation of service 
resources to incoming tasks. 
We illustrate our approach on two broad classes of processing systems, 
namely the flexible fork-join networks and the flexible queueing networks, 
and prove the rate stability of our proposed policies for these networks 
under non-restrictive assumptions. 
\iffalse
We also complement the theoretical results with numerical studies.
\fi
} \vspace{0.2in}
\end{abstract}

\keywords{Robust Scheduling; Flexible Queueing Networks; Stochastic Gradient Projection} % insert keywords separated by a semicolon
\ams{60K25}{90B15; 60G17}

\renewcommand{\thefootnote}{\arabic{footnote}}
\setcounter{footnote}{0}

\section{Introduction}\label{sec:intro}
As modern processing systems 
(e.g., data centers, hospitals, manufacturing networks) 
grow in size and sophistication, their infrastructures become more complicated, 
and a key operational challenge in many such systems 
is the efficient scheduling of processing resources to meet 
various demands in a timely fashion. 
\yuan{A scheduling policy decides how server capacities are allocated over time, 
and a major challenge in designing such policies is the lack of knowledge of system parameters due to the complex processing environment and the volatility of the jobs to be processed. 
Demands are diverse, typically unpredictable, and can occur in bursts; 
furthermore, operating conditions of processing resources 
can vary over time (see e.g., \cite{DCtraffic}). Thus, estimates of key system parameters such as arrival and/or service rates  
are often unreliable, and can frequently become obsolete.} One approach to \yuan{address} this complicated scheduling and resource allocation problem %in large-scale networks 
is to design \emph{robust} scheduling policies, 
\yuan{where scheduling decisions are made only based on current and/or past system states such as 
queue sizes, and not depending on system parameters such as arrival or service rates.}
%A scheduling policy decides how server capacities are allocated over time, and it 
%is called robust if the scheduling decisions are based only on past queue sizes (number of different task types to be processed in the network), and do not depend on system parameters such as arrival or service rates. 
Robust scheduling policies \yuan{can be} highly desirable in practice, 
since %a) parameter estimates are often unreliable, and server operating 
%conditions can vary over time, rendering earlier estimates obsolete ; 
%and b) 
they use only minimal information and \yuan{can} adapt 
to changes in demands and service conditions automatically. The main objective of this paper is to develop 
\yuan{a general framework} for designing robust policies and analyzing their performance.

\yuan{Consider a single-server queueing system 
with unit-size jobs arriving at an unknown rate $\lambda$, 
and a server with a costly and sufficiently large service capacity $\mu$ (in particular, $\mu > \lambda$), whose precise value is unknown.
Suppose that at regular time intervals, 
the server can adjust its service effort, 
measured by the fraction $p \in [0, 1]$ of the total capacity 
(we can implement this in practice as a randomized decision 
of serving the queue with probability $p$). %its service rate $\mu$, which is costly, 
The goal is to keep the system stable. 
Let $\Delta Q$ be the queue size change over a regular time interval. 
Intuitively, if $\Delta Q > 0$, then it is likely that the arrival rate 
is faster than the dedicated service effort, which should then be increased. If $\Delta Q < 0$, 
then the service effort should be decreased for cost consideration. 
This naturally leads to an update rule for the service effort from time $n$ to $n+1$ of the form $p^{n+1}:=p^n + \beta^n \Delta Q$ 
with $\beta^n > 0$. Under mild technical conditions on the sequence $\{\beta^n\}$, it can be shown that 
$\mu^n \to \lambda$ almost surely, implying system stability.}

\yuan{This simple example illustrates the high-level approach of our policy design framework: 
allocate more (less) service to a queue if the corresponding queue size increases (decreases). 
Our design approach only uses the system state information -- namely, the queue size changes -- 
and does not require information on either the arrival or service rates. 
A simple but key observation that justifiess the validity of this approach is that 
if the queue size at the start of an interval is sufficiently large, 
then $\Delta Q$, the queue size change, is proportional to \yz{$\lambda - \mu p$} in expectation. 
In a network setting, by building upon this simple observation, we can decide 
how to allocate shared resources among competing queues based on their respective queue size changes.} 

Our methodology is general and can be applied to \yuan{a wide range of} processing networks. 
\yuan{To illustrate our approach concretely}, we focus on two \yuan{broad classes of} network models, 
which \yuan{(a) generalize many important classes of queueing network models, 
such as parallel server systems \cite{MS2004} and fork-join networks \cite{Nguyen1993} (see Section \ref{ssec:related} for more details), 
and (b) model key features of dynamic resource allocation at fine granularity in many} 
modern applications such as cloud computing, \yuan{flexible manufacturing}, and large-scale healthcare systems. \yuan{We now proceed to describe our network models and contributions in more detail.}

\yuan{Common to many modern large-scale processing systems are the following two important features:} (a) workflows of interdependent tasks, where the completion of one task produces new tasks to be processed in the system, and b) flexibility of processing resources with overlapping capabilities as well as flexibility of tasks to be processed by multiple servers. 
To illustrate these two features, consider 
the scheduling of a simple Mapreduce job \cite{mapreduce}
of word count of the play ``Hamlet'' in a data center (see Figure \ref{fig:hamlet}). 
{\em ``Mappers''} are assigned the tasks of word count by Act, 
producing intermediate results, which are then aggregated by the ``reducer''. 
In more elaborate workflows, these interdependencies 
can be more complicated. 
\yuan{There may also be} considerable overlap in the processing capabilities 
of the data center servers, and flexibility on where tasks can be placed \cite{mapreduce}.
Similarly, in a healthcare facility such as a hospital, 
an arriving patient may have a complicated workflow of service/treatment requirements \cite{AMZ2012}, which can also be assigned to doctors and/or nurses with overlapping capabilities. 
\yuan{To capture the dependencies in workflows and the system flexibility, we consider the following two classes of processing networks:}

\iffalse
\subsection{Main Contribution}\label{sec:contribution}
As the main contribution of the paper, we provide a theoretical framework of how to design a robust scheduling and capacity allocation policy, and apply this policy to two different processing networks that we propose: 
\fi

\begin{itemize}
\item[(i)] a flexible \yuan{fork-join} processing 
network model, in which jobs are modeled as directed acyclic 
graphs (DAG), with nodes representing \emph{tasks}, and 
edges representing \emph{precedence constraints} among tasks,  
\yuan{and servers have overlapping capabilities; and}
\item [(ii)] a flexible queueing network with probabilistic routing structure, where a job goes through processing steps in different queues \yuan{according to a routing matrix, and servers have overlapping capabilities}. 
%This model was first proposed in \cite{AAD2003}, and includes some important special cases such as flexible parallel server systems (e.g., \cite{MS2004})
%and the open multi-class queueing networks \cite{HN93,dai99}. 
\end{itemize}

We design a robust scheduling policy for \yuan{each class of networks}, 
\yuan{and analyze performance properties of the proposed policies.}
\yuan{For both models,} we prove the throughput optimality\footnote{We are concerned with {\em rate stability} in this paper. A scheduling policy is {\em throughput optimal} if, under this policy, the system is stable whenever there exists some policy under which the system is stable.} of our policies, under \yuan{a factorization criterion on service rates (see Assumption \ref{asmp:factor} of Section \ref{sec:policy} for details)}.
%\yuan{When this criterion fails, we also demonstrate via a numerical example that our policies can lead to instability 
%(see Section \ref{sec:X}). [YZ: do we want to put this sentence here?]} 
Our policy \yuan{design} is based on the simple idea of matching incoming flow rates to their respective service rates, and detecting mismatches using queue size information. If system parameters were known, a so-called static planning problem \cite{Harrison2000} can be solved to obtain the optimal allocation of server capacities, which balances flows in the system. Without the knowledge of system parameters, however, the policy updates the allocation of server capacities according to changes in queue sizes. 
\yuan{Methodologically,} our policy uses the idea of stochastic gradient descent (see e.g., \cite{Borkar2008}), a technique that \yuan{has been applied in the design of distributed policies in ad-hoc wireless networks \cite{JW2010}.}

\begin{figure}
\centering
\includegraphics[scale=0.3]{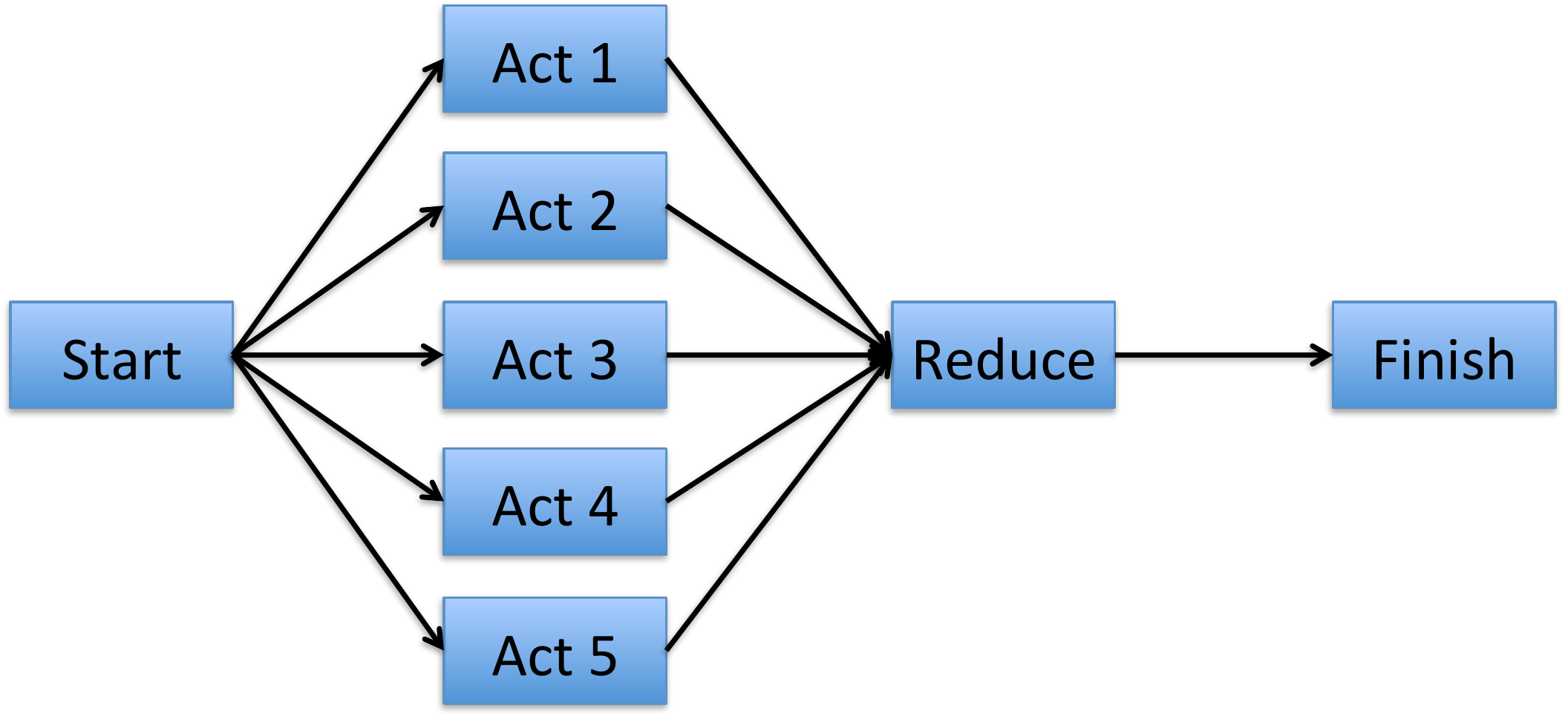}
\caption{Word count of Hamlet in MapReduce}
\label{fig:hamlet}
\end{figure}

\subsection{Related Works}\label{ssec:related} 
Scheduling of queueing networks has been studied extensively over several decades. We do not attempt
to provide a comprehensive literature review here; instead \yuan{we review the most relevant works.}

Our \yuan{flexible fork-join network} model 
is closely related to \yuan{and substantially generalizes the} classical fork-join networks 
(see e.g., \cite{BMS1989, BM1985, KW1989, BW1991, Nguyen1993, Nguyen1994}). 
The main difference between the classical models and ours is 
that we allow tasks to be flexible, whereas tasks are assigned to 
dedicated servers in classical fork-join networks. 
\yuan{In classical networks, simple robust policies such as FIFO (First-In-First-Out) 
can often shown to be throughput optimal, but need not be so in our flexible networks.}
\iffalse
The most basic question regarding these networks 
concerns the necessary and sufficient conditions for stability, 
that is, for the existence of the steady-state probability distribution 
of the underlying Markov process. 

Given the stability results, the next natural question 
is to compute the steady-state expected completion times of DAGs.  
Few analytical results are available, except for the simplest models
(see e.g., \cite{FH1984, FH1985, MA1968}). 
Performance bounds 
on the stationary expected job completion time 
have been derived (see e.g., \cite{BMT1989, BMS1989, BM1985, BL1990}), 
but for most models, the tightness of these bounds is not known. 

An approach that has proved effective in revealing structural properties 
of complex queueing networks is so-called ``heavy-traffic analysis'', 
where the system state is scaled appropriately and system utilization approaches $1$. 
Works on fork-join networks in this direction include \cite{Nguyen1993, Nguyen1994, Varma1990}. 
\fi

The flexible queueing network model is closely related to the system considered in \cite{AAD2003} (\cite{AAD2003} also considers setup costs whereas we do not). 
%\cite{AAD2003} studies a class of generalized round robin 
%scheduling policies, extending those described in Section 2.9.2 of \cite{Dai99}. 
The policies in \cite{AAD2003} make use of arrival and service rates, 
and their throughput properties are analyzed using fluid models, 
hence their approach is distinct from ours. 
\yuan{We would also like to point out that }in the case where the queues are not flexible, 
i.e., each queue has a dedicated server, the system reduces to 
the open multiclass queueing network (see e.g., \cite{HN93, dai99}). 

\yuan{Both the flexible fork-join network model and the flexible queueing network model 
can be viewed as generalizations of the classical parallel server system, considered in e.g., \cite{MS2004}. 
The flexible fork-join network extends the parallel server system 
by allowing jobs to consist of tasks with precedence constraints, and the flexible queueing network 
extends the parallel server system by allowing probabilistic routing among jobs.} 
There is considerable interest in the study of robust scheduling algorithms 
in the context of parallel server systems. The well-know $Gc\mu$ rule 
-- equivalent to a MaxWeight policy with appropriately chosen weights on queues -- 
has been proved to have good performance properties, including throughput optimality
(e.g., \cite{MS2004}). The $Gc\mu$ rule does not make use of the knowledge of arrival rates, 
but does require the knowledge of service rates.  
\cite{BT2011} studies performance properties of the Longest-Queue-First (LQF) policy, 
which is robust to both arrival and service rates, 
and establishes its throughput optimality 
when the so-called activity graph is a tree.  
\cite{SY2012} established the throughput optimality of a priority discipline 
in a many-server parallel server system, which consists of server pools, each of which in turn consists of a large number of identical servers, also under the condition that the activity graph is a tree.
\cite{DW2006} established the throughput optimality of LQF under a local pooling condition. 
\yuan{To the best of our knowledge, 
in the flexible fork-join networks and the flexible queueing networks, both extensions of parallel server systems, 
which include precedence constraints and routing, respectively, the problem of designing robust scheduling policies 
has not been addressed prior to this work.}

\yuan{As mentioned earlier, the analysis of our policies uses the technique of stochastic gradient descent \cite{Borkar2008}, 
which has been successfully employed in the design 
of distributed CSMA algorithms for wireless networks \cite{JW2010}. 
Our analysis is different from that of the CSMA algorithms in several ways; 
for example, CSMA algorithms actively attempt to estimate the arrival and service rates, 
whereas our policy is adaptive, and only reacts to these parameters through queue size changes.}
%is robust to the knowledge of service rates 
% First, CSMA algorithms tries to emulate the well-known Max-Weight scheduling policy 
%\cite{TE92} in a distributed fashion, but the robust algorithm tries to adaptively find the right share of server's capacities for each task in the network. Second, Q-CSMA algorithms requires full knowledge of service rates, but our policy is robust to the knowledge of task service rates.}

\subsection{Paper Organization}\label{sec:organization}
The rest of the paper is organized as follows. In Section \ref{sec:DAG}, we introduce \yuan{the flexible fork-join network model}. We propose our robust scheduling policy, and state our main theorems.   
In Section \ref{sec:FQN}, we describe the flexible queueing network model, and design a robust scheduling policy for this network. We conclude the paper in Section \ref{sec:conclusion}. All proofs are provided in the appendices.

\section{Scheduling DAGs with Flexible Servers}\label{sec:DAG}
\subsection{System Model}\label{sec:system}
We consider a general flexible fork-join processing network, in which jobs are modeled as directed acyclic graphs (DAG). Jobs arrive to the system as a set of tasks, among which there are precedence constraints. Each node of the DAG represents one task type\footnote{{We 
make use of both the concepts of {\em tasks} and {\em task types}. 
To avoid confusion and overburdening terminology, 
we will \yz{often} use {\em node} synonymously with {\em task type} for the rest of the paper.}}, 
and each (directed) edge of the DAG represents a precedence constraint. 
More specifically, we consider $M$ classes of jobs, each of them represented by one DAG structure. 
Let ${\cal G}_m = ({\cal V}_m,{\cal E}_m)$ be the graph representing the job of class $m$, $~1\leq m \leq M$, where ${\cal V}_m$ denotes the set of nodes of type-$m$ jobs, and ${\cal E}_m$ the set of edges of the graph. Note that sets ${\cal V}_m$ are disjoint. Let ${\cal V} = \cup_{m=1}^M {\cal V}_m$ and ${\cal E} = \cup_{m=1}^M {\cal E}_m$. We suppose that each ${\cal G}_m$ is connected, 
so that there is an undirected path between any two nodes of ${\cal G}_m$. 
There is no directed cycle in any ${\cal G}_m$ by the definition of DAG. 
Let the number of nodes of job type $m$ be $K_m$, i.e. $|{\cal V}_m|=K_m$. 
Let the total number of nodes in the network be $K$. Thus, $\sum_{m=1}^M K_m = K$. We index the task types in the system by $k, ~1 \leq k \leq K$, starting from job type $1$ to $M$. 
Thus, task type $k$ belongs to job type $m(k)$ if 
$$\sum_{m'=1}^{m(k)-1} K_{m'}<k \leq \sum_{m'=1}^{m(k)} K_{m'}.$$ 
We call node $k'$ a {\em parent} of node $k$, 
if they belong to same job type $m$, and $(k',k) \in {\cal E}_m$. Let ${\cal P}_k$ denote the set of parents of node $k$. In order to start processing a type-$k$ task, 
the processing of all tasks of its parents {\em within the same job} should be completed. Node $k$ is said to be a \emph{root} of DAG type $m(k)$, if ${\cal P}_k = \emptyset$. We call $k'$ an \emph{ancestor} of $k$ if they belong to the same DAG, and there exists a directed path of edges from $k'$ to $k$. Let $L_k$ be the length of the longest path from the root nodes of the DAG, ${\cal G}_{m(k)}$, to node $k$. If $k$ is a root node, then $L_k = 0$.

There are $J$ servers in the processing network. A server is {\it flexible} if it can serve more than one type of tasks. A task type is flexible if it can be served by more than one server. In other words, servers can have overlap of capabilities in processing a node. For each $j$, we define ${\cal T}_j$ to be the set of nodes that server $j$ is capable of serving. Let $T_j = | {\cal T}_j |$. For each $k$, let ${\cal S}_k$ be the set of servers 
that can serve node $k$, and let $S_k = |{\cal S}_k |$. Without loss of generality, we also assume that $T_j, S_k \geq 1$ for all $j$ and $k$, so that each server can serve at least one node, and each node can be served by at least one server. 
\begin{comment}
The \emph{activity graph} of the network is the set of all pairs $(k, j)$ such that $k \in {\cal T}_j$ (equivalently, $j \in {\cal S}_k$).
\end{comment}
\begin{example}
Figure \ref{fig:DAG} illustrates the DAG of one job type that consists of four
nodes $\{1, 2, 3, 4\}$.  There are two servers $1$ and $2$.  Server $1$ can process
tasks of types in the set ${\cal T}_1 = \{1, 2, 3\}$ and server $2$ can process tasks of types 
in the set ${\cal T}_2 = \{3, 4\}$.
When a type-$1$ task is completed, it ``produces'' one type-$2$ task and one type-$3$ task, 
both of which have to be completed before 
the processing of the type-$4$ task of the same job can start. 
\end{example}

%\begin{figure}
%\centering
%    \includegraphics[width= 0.28\textwidth]{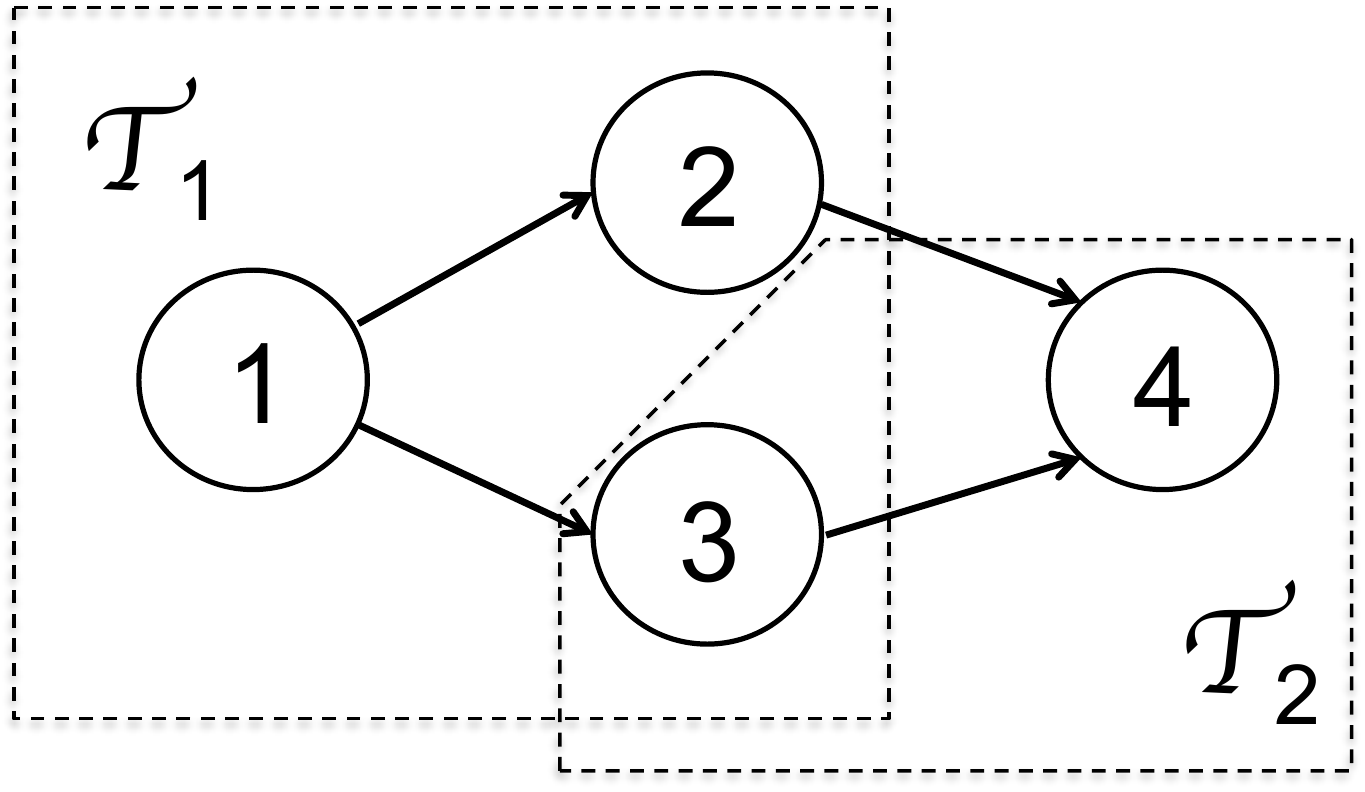}
%  \caption{\label{fig:DAG}A simple DAG} 
%\end{figure}

\begin{figure}
        \centering
        \begin{subfigure}{0.45\textwidth}
                \includegraphics[width= 4.5cm, height = 3cm]{DAG_simple.pdf}
  				\caption{\label{fig:DAG}A simple DAG} 
        \end{subfigure}
        \qquad
        \begin{subfigure}{0.45\textwidth}
               \includegraphics[width= 5cm, height = 3cm]{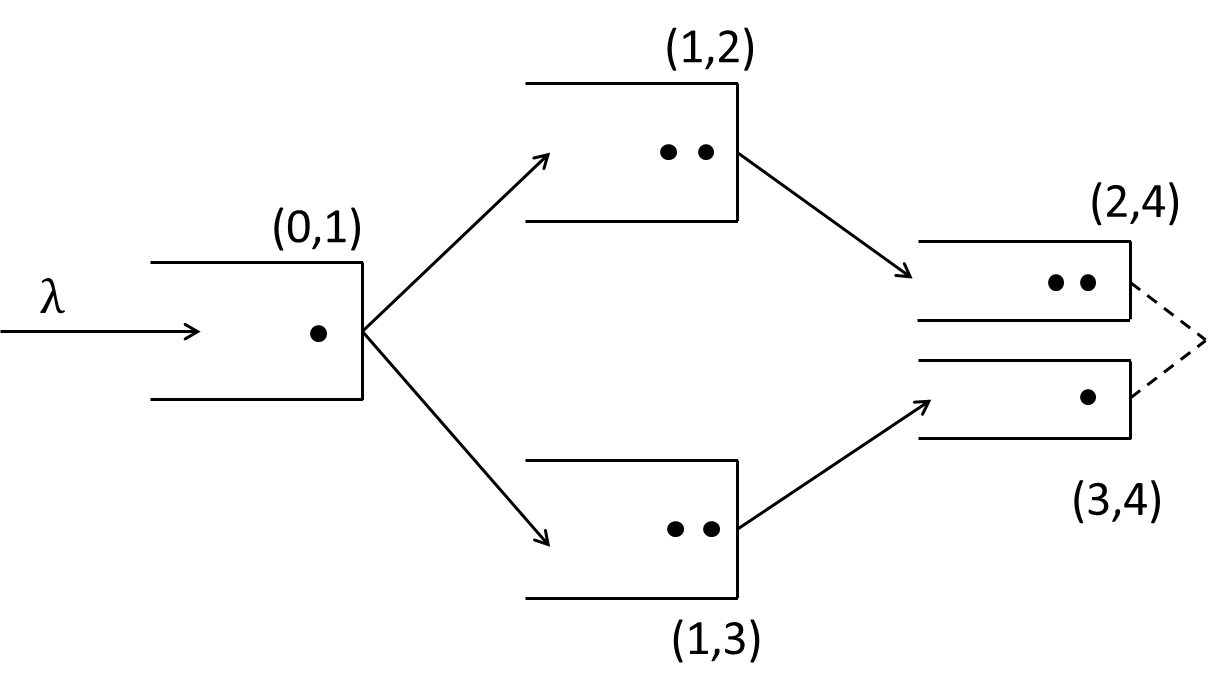}
  			\caption{\label{fig:QN}Queueing Network of the DAG}
        \end{subfigure}
        \caption{This figure illustrates a simple DAG and its corresponding queueing network.}\label{fig:DAG-QN}
\end{figure}

We consider the system in discrete time. We assume that the arrival process of type-$m$ jobs is an independent Bernoulli process with rate $\lambda_m$, $0 < \lambda_m <1$; that is, in each time slot, a new job of type $m$ arrives to the system with probability $\lambda_m$, independently over time. We assume that the service times are geometrically distributed and independent of everything else. When server $j$ processes task $k$, the service completion time has mean $\mu^{-1}_{kj}$. Thus, $\mu_{kj}$ can be interpreted as the service rate of node $k$ when processed by server $j$. 

\subsection{Queueing Network Model for Cooperative Servers}\label{sec:QN}
We model our processing system as a queueing network in the following manner. We maintain one virtual queue of processed tasks that are sent from node $k'$ to $k$ for each edge of the DAGs $(k',k) \in {\cal E}$. Furthermore, we maintain a virtual queue for the root nodes of the DAGs. Let $\chi_m$ be the number of root nodes in the graph of job type $m$. Then, the queueing network has $\sum_{m=1}^M (|{\cal E}_m| + \chi_m)$ virtual queues. As an example, consider the DAG of Figure \ref{fig:DAG}. \yuan{The virtual queues corresponding to} this DAG is \yuan{illustrated} in Figure \ref{fig:QN}. \yz{There are $5$ virtual queues in total, 4 for edges of the graph, and one for the root node $1$.}
\\ \\
\textbf{Job identities \yuan{and synchronization}.} In our model, jobs and tasks have distinct identities. 
This is mainly motivated by \yuan{applications to} data centers and healthcare systems. For instance, it is important not to mix up blood samples of different patients in hospital, and to put pictures on the correct webpage in a data center setting.

To illustrate how a job is processed in the preceding queueing network, 
consider the network in Figure \ref{fig:QN}. 
Suppose that a job of identity $a$ and with a DAG structure of Figure \ref{fig:DAG} enters the network. 
When task 1 of job $a$ from queue $(0, 1)$ is processed, tasks $2$ and $3$ of job $a$ are sent to queues $(1,2)$ and $(1,3)$, respectively. When tasks in queues $(1,2)$ and $(1,3)$ are processed, their results are sent to queues $(2,4)$ and $(3,4)$, respectively. Finally, to process task $4$ of job $a$, one task belonging to job $a$ from queue $(2,4)$ and one task belonging to job $a$ from $(3,4)$ are gathered and processed to finish processing job $a$. We emphasize that tasks are identity-aware in the sense that to complete processing task $4$, it is not possible to merge any two tasks (of possibly different jobs) from queues $(2,4)$ and $(3,4)$. 

\iffalse
We describe the synchronization issue in our queueing network model if servers are non-cooperative, through a simple example of DAG shown in Figure \ref{fig:DAG} and its corresponding queueing network shown in Figure \ref{fig:QN}. Suppose that job $a$ arrives shortly before job $b$ to the system. Assume that server $1$ is much faster than server 2. The following sequence of events can happen. First, server $1$ processes task $1$ of job $a$ in queue $(0,1)$, and then it processes task $1$ of job $b$ in queue $(0,1)$ while server $2$ starts processing task $3$ of job $a$ in queue $(1,3)$. Server $1$ finishes processing task $2$ of job $a$ and the result is sent to queue $(2,4)$. Then server $1$ starts processing task $3$ of job $b$ and sends the result to queue $(3,4)$ before task $3$ of job $a$ is fully processed by server $2$. Now the head of the line task of queue $(2,4)$ belongs to job $a$ and the head of the line task of queue $(3,4)$ belongs to job $b$. Thus, an identity-oblivious policy may wrongly schedule processing task type $4$ by merging the results of jobs $a$ and $b$ from queues $(2,4)$ and $(3,4)$.  This problem does not arise if the processing of the tasks are FIFO.  This is the case
if the servers cooperate on the same task, adding their service capacities.  
\fi
% For instance, for a manufacturing job, two workers may work together on a task and complete it faster.  
%\begin{figure}
%\centering
%    \includegraphics[width= 0.4\textwidth]{QN.png}
%  \caption{\label{fig:QN}Queueing Network of DAG in Figure \ref{fig:DAG}} 
%\end{figure}

\yuan{A common and important problem that needs to be addressed in scheduling DAGs 
is {\em synchronization}, where all parents of a task need to be completed 
for the task to be processed. In the presence of flexibility, 
synchronization constraints may lead to disorder in task processing, which adds synchronization penalty to the system; 
see \cite{PWZ2014} for an example.}
In this paper, to guarantee synchronization, we \yuan{assume the simplifying condition} that servers are cooperative \yuan{(this is equivalent to the case of cooperating servers 
described in \cite{AAD2003})}. That is, we assume that servers that work on the same task type, cooperate on the same head-of-the-line task, adding their service capacities. 
\yuan{In this way, tasks are processed in a FIFO manner so that no synchronization penalty is incurred.}  \\ \\
{\bf Queue Dynamics.} Now we describe the dynamics of the queueing network. Let $Q_{(k',k)}$ denote the length of the queue corresponding to edge $(k',k)$ and let $Q_{(0,k)}$ denote the length of the queue corresponding to root node $k$. A task of type $k$ can be processed 
if and only if $Q_{(k',k)} > 0$ for all $k' \in {\cal P}_k$ {-- this is because servers 
are cooperative, and tasks are processed in a FIFO manner}. Thus, the number of tasks of node $k$ available to be processed is $\min_{k' \in {\cal P}_k} Q_{(k',k)}$, if $k$ is not a root node, and $Q_{(0,k)}$, if $k$ is a root node. For example, in Figure \ref{fig:QN}, queue $(2,4)$ has length 2 and queue $(3,4)$ has length 1, so there is one task of type $4$ available for processing. When one task of class $k$ is processed, lengths of all queues $(k',k)$ are decreased by $1$, where $k' \in {\cal P}_k$, 
and lengths of all queues $(k,i)$ are increased by $1$, where $k \in {\cal P}_i$. Therefore, the dynamics of the queueing network is as follows. Let $d^n_k \in \{ 0,1\}$ be the number of processed tasks of type $k$ at time $n$, and $a^n_m \in \{0 , 1\}$ be the number of jobs of type $m$ that arrives at time $n$. If $k$ is a root node of the DAG, then
\begin{equation}\label{eq:dynamics1}
Q^{n+1}_{(0,k)} = Q^n_{(0,k)} + a^n_{m(k)} - d^n_k;
\end{equation}
else,
\begin{equation}\label{eq:dynamics2}
Q^{n+1}_{(k',k)} = Q^n_{(k',k)} + d^n_{k'} - d^n_k.
\end{equation}
 
Let $p_{kj}$ be the fraction of capacity that server $j$ allocates for processing available tasks of class $k$. We define $p=[p_{kj}]$ to be the \emph{allocation vector}. If server $j$ allocates all its capacity to different tasks, then $\sum_{k \in {\cal T}_j} p_{kj}=1$. Thus, an allocation vector $p$ is called \emph{feasible} if 
\begin{equation}\label{eq:feasible} 
\sum_{k \in {\cal T}_j} p_{kj} \leq 1, ~ \forall ~ 1 \leq j \leq J.
\end{equation}

We interpret the allocation vector at time $n$, $p^n = [p^n_{kj}]$, as randomized scheduling decisions at time $n$, in the following manner. 
First, without loss of generality, the system parameters can always be re-scaled 
so that \yz{$\sum_{j \in \mathcal{S}_k} \mu_{kj} < 1$} for all $k$, 
by speeding up the clock of the system.
Now suppose that at time slot $n$, 
the allocation vector is $p^n$.  
Then, the head-of-the-line task $k$ 
is served with probability $\sum_{j \in {\cal S}_k} \mu_{kj} p^n_{kj}$ in that time slot. 
Note that \yz{$\sum_{j\in {\cal S}_k} \mu_{kj} p^n_{kj} < 1$} 
by our scaling of the service rates. 

\subsection{The Static Planning Problem}\label{sec:static}
In this subsection, we introduce a linear program (LP) that characterizes the \emph{capacity region} of 
the network, defined to be the set of all arrival rate vectors $\lambda$ 
where there is a scheduling policy under which the queueing network of the system is stable\footnote{{As mentioned earlier, the} stability condition that we are interested in is rate stability.}. 
The \emph{nominal} traffic rate to all nodes of job type $m$ in the network is $\lambda_m$. Let $\nu = [\nu_k] \in \mathbb{R}^K_+$ be the set of nominal traffic rate of nodes in the network. Then, $\nu_k = \lambda_m$ if $m(k) = m$, i.e., if $\sum_{m'=1}^{m-1} K_{m'}<k \leq \sum_{m'=1}^{m} K_{m'}$. The LP that characterizes the capacity region of the network makes sure that the total service capacity allocated to each node in the network is at least as large as the nominal traffic rate to that node. \yuan{Formally,} the LP -- known as the  \emph{static planning problem} \cite{Harrison2000} -- is defined as follows. 

\begin{eqnarray}
\text{Minimize} & & ~\rho \label{eq:LP} \\ \nonumber
\text{subject to} & & ~\nu_k \leq \sum_{j \in {\cal S}_k} \mu_{kj} p_{kj}, ~ \forall ~ 1 \leq k \leq K\\
 & & ~\rho \geq \sum_{k \in {\cal T}_j} p_{kj}, \quad ~~~ \forall ~ 1 \leq j \leq J, \label{eq:server-allocation} \\
 & & p_{kj} = 0, \quad \quad \quad \quad \text{if $k \not \in \mathcal{T}_j$}, \label{eq:LP3} \\ 
 & & p_{kj} \geq 0.
\end{eqnarray}

\begin{prop}\label{prop:rho}
Let the optimal value of the LP be $\rho^*$. 
Then $\rho^* \leq 1$ is a necessary and sufficient condition of rate stability of the system \ramtin{under some scheduling policy.}
\end{prop}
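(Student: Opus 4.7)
The plan is to prove necessity and sufficiency separately.

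For necessity, given any scheduling policy under which the system is rate stable, I will extract a feasible solution of the LP with objective value at most $1$. Let $D_k^n := \sum_{t<n} d_k^t$ denote the cumulative completions of task $k$ by time $n$. From the queue dynamics \eqref{eq:dynamics1} and \eqref{eq:dynamics2}, rate stability of the root queue $(0,k)$ gives $D_k^n/n \to \lambda_{m(k)} = \nu_k$, while rate stability of a non-root queue $(k',k)$ gives $D_{k'}^n/n = D_k^n/n + o(1)$; an induction on the DAG level $L_k$ then yields $D_k^n/n \to \nu_k$ for every $k$. Passing to a subsequence along which $\bar p_{kj} := \lim n^{-1} \sum_{t<n} p^t_{kj}$ exists for all pairs $(k,j)$ (such a subsequence exists by compactness of $[0,1]^{K\times J}$), the feasibility constraint \eqref{eq:feasible} gives $\sum_{k \in \mathcal{T}_j} \bar p_{kj} \leq 1$. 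Since per-slot completions of task $k$ are stochastically dominated by independent Bernoulli variables with parameter $\sum_{j} \mu_{kj} p^t_{kj}$, an LLN/martingale concentration argument yields $\nu_k \leq \sum_{j\in\mathcal{S}_k} \mu_{kj} \bar p_{kj}$. Thus $(\bar p, 1)$ is LP-feasible and $\rho^* \leq 1$.

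For sufficiency, assume $\rho^* \leq 1$ and let $p^*$ be an optimal allocation. Consider the static policy that sets $p^n_{kj} = p^*_{kj}$ for every $n$; whenever the head-of-the-line task of type $k$ is available, it is completed in that slot with probability $\mu_k^* := \sum_{j \in \mathcal{S}_k} \mu_{kj} p^*_{kj} \geq \nu_k$, independently across slots. I will establish rate stability by induction on the DAG level $L_k$. For a root task $k$, queue $(0,k)$ is a single-server queue with Bernoulli arrival rate $\nu_k$ and service rate $\mu_k^* \geq \nu_k$, so a standard Loynes-type argument (or a direct random-walk comparison) yields $Q_{(0,k)}^n / n \to 0$ and $D_k^n / n \to \nu_k$ almost surely. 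For a non-root $k$, the inductive hypothesis gives $D_{k'}^n/n \to \nu_k$ for every $k' \in \mathcal{P}_k$; the cooperative FIFO discipline guarantees that the number of type-$k$ tasks that have become ready by time $n$ equals $\min_{k'\in\mathcal{P}_k} D_{k'}^n = \nu_k n + o(n)$. The aggregate ready-buffer for type $k$ is then again a single-server queue with arrival rate $\nu_k$ and service rate $\mu_k^*$, so the same one-dimensional argument gives $D_k^n/n \to \nu_k$ and $Q_{(k',k)}^n/n \to 0$ for every parent edge of $k$, completing the induction.

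The main obstacle lies in the sufficiency step, because the synchronization constraint couples the departure dynamics of task $k$ to the joint state of all its parent queues: servers assigned to type-$k$ tasks idle whenever $\min_{k'\in\mathcal{P}_k} Q_{(k',k)}^n = 0$. The inductive approach circumvents this coupling by reducing each DAG level to an autonomous one-dimensional queue whose arrival stream is inherited at the correct rate $\nu_k$ from the previous level's rate stability, allowing a clean application of single-queue stability results. A minor subtlety in the necessity argument is the use of a convergent subsequence; since the allocation polytope is compact and every subsequential limit satisfies the same inequalities, the conclusion $\rho^* \leq 1$ does not depend on the choice of subsequence.
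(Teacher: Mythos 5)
Your proof is correct in its essential structure, but it takes a genuinely different technical route from the paper. The paper works through the fluid model: for necessity it argues by contradiction on the fluid trajectory (pick a regular point, differentiate, deduce that the derivatives $\dot D_k^{t_1}$ give an LP-feasible allocation, contradicting $\rho^*>1$, and invoke weak instability), and for sufficiency it shows the fluid queues stay at zero under the static HLPS policy induced by $p^*$. You instead work directly at the stochastic level: for necessity you infer $D_k^n/n\to\nu_k$ by induction on $L_k$ from rate stability, extract a subsequential Cesàro average $\bar p_{kj}$ of the (history-dependent) allocation process, and use a martingale-plus-SLLN comparison to get $\nu_k\le\sum_j\mu_{kj}\bar p_{kj}$; for sufficiency you run the same DAG-level induction, reducing each level to a one-dimensional queue with arrival rate $\nu_k\le\mu_k^*$. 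Both approaches are sound; the paper's is shorter because it leans on established fluid-model machinery (in particular the passage from fluid weak instability to stochastic instability, and fluid-level induction in the sufficiency step), whereas yours avoids fluid limits entirely at the cost of two points that deserve more care: (a) in necessity, the completions are not independent Bernoulli since $p^t_{kj}$ depends on the history, so the bound $D_k^n/n\le n^{-1}\sum_{t<n}\sum_j\mu_{kj}p^t_{kj}+o(1)$ should be stated as a bounded-increment martingale SLLN rather than as stochastic domination by an independent sequence; and (b) in sufficiency, the ``arrivals'' to the type-$k$ ready buffer are $\Delta\min_{k'}D^n_{k'}$, which is not an i.i.d.\ or even stationary process, so the ``Loynes-type'' step should be replaced by the elementary fact that a single queue with cumulative-arrival rate $a$ and i.i.d.\ service at rate $\mu\ge a$ is rate stable (this is essentially what the paper's Lemma~\ref{lem:rate} proves). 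Also note that the paper reserves a fluid-level argument for this proposition precisely so that it does not have to re-derive the single-queue rate-stability lemma here; your proof essentially imports a weaker form of that lemma.
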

The proof of Proposition \ref{prop:rho} is provided in Appendix \ref{apdx:prop}.

Thus, by Proposition \ref{prop:rho}, the \emph{capacity region} $\Lambda$ 
of the network is the set of all $\lambda \in \mathbb{R}_+^M$
for which the corresponding optimal solution $\rho^*$ to the LP satisfies $\rho^* \leq 1$. 
More formally, 
\begin{align*}
\Lambda \triangleq \Bigg\{ \lambda \in \mathbb{R}_+^M : \exists ~p_{kj} \geq 0 \text{ such that } 
\sum_{k \in {\cal T}_j} p_{kj} \leq 1 ~~\forall ~j, \text{ and } \nu_k \leq \sum_{j \in {\cal S}_k} \mu_{kj} p_{kj} ~~\forall ~k \Bigg\}.
\end{align*}

\subsection{Scheduling Policy Robust to Task Service Rates}\label{sec:policy}
\yuan{In this subsection,} we make the following assumption on service rates $\mu_{kj}$.
 
\begin{assumption}\label{asmp:factor} 
For all $k$ and all $j \in {\cal S}_k$, service rates $\mu_{kj}$ can be factorized to two terms: a task-dependent term $\mu_k$, and a server-dependent term $\alpha_j$. Thus, $\mu_{kj} = \mu_k \alpha_j$. 
\end{assumption}

While Assumption \ref{asmp:factor} appears somewhat restrictive, it covers 
a variety of important cases. 
When $\alpha_j = 1$ for all $j$, the service rates are task dependent. 
This case models, for example, a data center of servers with the same processing speed 
(possibly of the same generation and purchased from the same company), 
but with different software compatibilities, and possibly hosting overlapping sets of data blocks. 
The case when $\alpha_j$ are different can model the inherent heterogeneous 
processing speeds of the servers. 

\iffalse
We remark that Assumption \ref{asmp:factor} is crucial for our proposed scheduling algorithm to be throughput optimal. In Section \ref{sec:X}, we provide an example of a simple network with generic service rates that is unstable under the proposed scheduling policy.
\fi

We now propose a scheduling policy with known $\alpha_j$, which is robust to task service rates $\mu_k$, 
and prove that it is throughput optimal. 
The idea of our scheduling policy is quite simple: it reacts to queue size changes by adjusting the service allocation vector $p = [p_{kj}]$. 
Since service rates $\mu_{kj}$ are factorized to two terms $\mu_k$ and $\alpha_j$, only the sum $p_k \triangleq  \sum_{j \in {\cal S}_k} \alpha_j p_{kj}$ affects 
the effective service rate for node $k$. One can consider $p_k$ as the total capacity that all the servers allocate to node $k$ in a time slot. So, with a slight abuse of notation and terminology, we call $p = [p_k]$ the service allocation vector from now on.   

To precisely describe our proposed scheduling algorithm, first we introduce some notation. Let $\bOne\{Q^n_{(k',k)} > 0\}$ be the indicator that the queue corresponding to edge $(k',k)$ is non-empty at time $n$.  Let $\Delta Q^{n+1}_{(k',k)} = Q^{n+1}_{(k',k)}-Q^n_{(k',k)}$ be the size change of queue $(k',k)$ from time $n$ to $n+1$. Define $E^n_k$ to be the event that there is a strictly positive number of type-$k$ tasks 
to be processed at time $n$. Thus, $E^n_k = \{Q^n_{(0,k)} > 0\}$ if $k$ is a root node, 
and $E^n_k = \{Q^n_{(k',k)} > 0, ~\forall k' \in {\cal P}_k \}$ if $k$ is not a root node. 
Also let $\bOne_{E^n_k}$ be the indicator function of event $E^n_k$. 

%$1\{E^n_k\}$ to be the indicator of the event that there is a strictly positive number of type-$k$ tasks to be processed at time $n$, i.e. if $k$ is a root node, 
%$
%1\{E^n_k\} = 1\{Q^n_{(0,k)} > 0 \},
%$
%and if $k$ is not a root node,
%\begin{equation*}
%1\{E^n_k\} = 1\{Q^n_{(k',k)} > 0, ~\forall k' \in {\cal P}_k \}.
%\end{equation*}
Let $\mathcal{C} \subseteq \mathbb{R}^K_+$ be the polyhedron of feasible service allocation vector $p$.
%\begin{align}
%\mathcal{C} & = \Bigg \{p \in \mathbb{R}^K : \exists ~ p_{kj} ~\text{such that} ~ \sum_{j \in {\cal S}_k} \alpha_j p_{kj} = p_k ~~\forall k, \nonumber \\ \label{eq:convex-set}
%&\quad \quad \quad \quad \quad \quad p_{kj}\geq 0 ~~ \forall k,j, ~ \sum_{k \in {\cal T}_j} p_{kj} \leq 1 ~~\forall j \Bigg \}.
%\end{align}
\begin{align} \label{eq:convex-set}
\mathcal{C} = \Bigg \{p \in \mathbb{R}^K : \exists ~ p_{kj} ~\text{such that} ~ \sum_{j \in {\cal S}_k} \alpha_j p_{kj} = p_k ~~\forall k, 
~ p_{kj}\geq 0 ~~ \forall k,j, ~ \sum_{k \in {\cal T}_j} p_{kj} \leq 1 ~~\forall j \Bigg \}.
\end{align}
For any $K$-dimensional vector $x$, 
let $[x]_{\mathcal{C}}$ denote the convex projection of $x$ onto $\mathcal{C}$. 
Finally, let $\{\beta^n\}$ be a positive decreasing sequence with the following properties: (i) $\sum_{n=1}^{\infty} \beta^n = \infty$, (ii) $\sum_{n=1}^{\infty} (\beta^n)^2 < \infty$, and (iii) $\lim_{n \to \infty} \frac{1}{n \beta^n} < \infty$.
 
As we will see in the sequel, a key step of our algorithm is to find an unbiased estimator of $\lambda - \mu_k p_k$ for all $k$, based on the current and past queue sizes. Toward this end, for each node $k$, 
we first pick a path of queues from a queue corresponding to a root node of the DAG to queue $(k',k)$ for some $k' \in {\cal P}_k$. Note that the choice of this path need not be unique. Let ${\cal H}_k$ denote the set of queues on this path from a root node to node $k$. For example, in the DAG of Figure \ref{fig:DAG}, for node $4$, we can pick the path ${\cal H}_4 = \{ (0,1),(1,2),(2,4)\}$. Then, we use $\sum_{(i',i) \in {\cal H}_k} \Delta Q_{(i',i)}$ as an unbiased estimate of $\lambda - \mu_k p_k$. To illustrate the reason behind this estimate, consider the DAG in Figure \ref{fig:QN}. It is easy to see that 
%\begin{align*}
% \mathbb E(\Delta Q^{n+1}_{(0,1)} + \Delta Q^{n+1}_{(1,2)} + \Delta Q^{n+1}_{(2,4)} & |Q^n_{(2,4)}>0, Q^n_{(3,4)}>0) \\
%& = \lambda - \mu_4 p^n_4.
%\end{align*} 
\begin{align*}
 \mathbb E(\Delta Q^{n+1}_{(0,1)} + \Delta Q^{n+1}_{(1,2)} + \Delta Q^{n+1}_{(2,4)} |Q^n_{(2,4)}>0, Q^n_{(3,4)}>0,p^n)  = \lambda - \mu_4 p^n_4.
\end{align*} 

In general, if $L_k = L-1$ for node $k$ 
(recall that $L_k$ is the length of the longest path from a root node to $k$), one picks a path of edges 
$(i_0,i_1),(i_1,i_2),\ldots,(i_{L-1},i_L),$
such that $i_0 = 0$ and $i_L = k$. Then,
%\begin{align} \nonumber
%\EE &\left[ \sum_{l=0}^{L-1} \Delta Q^{n+1}_{(i_l,i_{l+1})} | \bOne_{E^n_k} = 1 \right] 
%=(\nu_k - \mu_{i_1}p^n_{i_1}\bOne_{E^n_{i_1}}) \\ \nonumber
%&~ + \sum_{l=1}^{L-1}  (\mu_{i_l}p^n_{i_l}\bOne_{E^n_{i_l}} - \mu_{i_{l+1}}p^n_{i_{l+1}}\bOne_{E^n_{i_{l+1}}}) \\ \label{eq:unbiased}
%&= \nu_k - \mu_k p^n_k \bOne_{E^n_k}.
%\end{align}
\begin{align} \nonumber
\EE \left[ \sum_{l=0}^{L-1} \Delta Q^{n+1}_{(i_l,i_{l+1})} | \bOne_{E^n_k} = 1,p^n \right] 
&=(\nu_k - \mu_{i_1}p^n_{i_1}\bOne_{E^n_{i_1}}) + \sum_{l=1}^{L-1}  (\mu_{i_l}p^n_{i_l}\bOne_{E^n_{i_l}} - \mu_{i_{l+1}}p^n_{i_{l+1}}\bOne_{E^n_{i_{l+1}}}) \\ \label{eq:unbiased}
&= \nu_k - \mu_k p^n_k \bOne_{E^n_k}.
\end{align}

\ramtin{Note that one can pick any path from a root node to $k$, but the longest path is picked in \eqref{eq:unbiased} for the purpose of ease of notation for the proofs.} Our scheduling algorithm updates the allocation vector $p^n$ in each time slot $n$ 
in the following manner. 
\begin{itemize}
\item[1.] We initialize with an arbitrary feasible $p^0$. 
\item[2.] Update the allocation vector $p^n$ as follows: 
\begin{align}\label{eq:update}
p^{n+1}_k = [p^n_k + \beta^n \bOne_{E^n_k} \sum_{(i',i) \in {\cal H}_k}\Delta Q^{n+1}_{(i',i)}]_{\mathcal{C}}.
\end{align}
\end{itemize}
This completes the description of the algorithm.

We now provide some intuition for the algorithm. As we mentioned, the algorithm tries to find adaptively the capacity allocated to task $k$, $p_k$, that balances the nominal arrival rate and departure rate of queues $(k',k)$. The nominal traffic of all the queues of DAG type $m$ is \ramtin{$\nu_{k}$ for task types $k$ belonging to job type $m$.} Thus, the algorithm tries to find $p^*_k = \frac{\nu_k}{\mu_k}$, in which case the nominal service rate of all the queues is $p^*_k \mu_k = \nu_k$. To find an adaptive robust algorithm, we formulate the following optimization problem. 
\begin{eqnarray}
\text{minimize} & & \frac12\sum_{k=1}^K (\nu_k - \mu_k p_k)^2 \nonumber \\ \label{eq:optimization}
\text{subject to} & & p \in \mathcal{C}.
\end{eqnarray}
Solving \eqref{eq:optimization} by the standard gradient descent algorithm, 
using step size $\beta^n$ at time $n$, leads to the update rule
\begin{equation}\label{eq:update3}
p^{n+1}_k = [p^n_k+\beta^n \mu_k(\nu_k - \mu_k p^n_k)]_{\cal C}.
\end{equation}
To make the update in \eqref{eq:update3} robust, first we consider a ``skewed" update
\begin{equation}\label{eq:skewed-update}
p^{n+1}_k = [p^n_k+\beta^n (\nu_k - \mu_k p^n_k)]_{\cal C},
\end{equation}
and second, we use the queue-length changes in \eqref{eq:unbiased} as an unbiased estimator of the term $\nu_k - \mu_k p^n_k$. This results in the update equation in \eqref{eq:update}. Thus, the update in \eqref{eq:update} becomes robust to knowledge of service rates $\mu_k$. The algorithm is not robust to knowledge of server rates, $\alpha_j$, since the convex set $\cal C$ is dependent on $\alpha_j$, and the projection requires the knowledge of server speeds $\alpha_j$. 

{Let us now provide some remarks on the implementation efficiency of the algorithm. 
First, the policy is not fully distributed. 
While the update variables $\bOne_{E^n_k} \sum_{(i',i) \in {\cal H}_k}\Delta Q^{n+1}_{(i',i)}$ can be 
computed locally, the projection $[\cdot]_{\cal C}$ requires full knowledge of all these local updates. 
%by the local servers, and sent to a central master scheduler in the system. Then, the updated vector $[p^n_k + 1\{E^n_k \} \sum_{(i',i) \in {\cal H}_k}\Delta Q^{n+1}_{(i',i)}]$ is projected on the convex set $\cal C$, the polyhedron of feasible allocation vectors. 
Second, since Euclidean projection on a polyhedron is a quadratic programming problem that can be solved efficiently in polynomial time by optimization algorithms such as the ``interior point method" \cite{Boyd}, the projection step $[\cdot]_{\cal C}$ can be implemented efficiently. 
%After projection is done, the central scheduler solves the equations $p^{n+1}_k = \sum_{j \in {\cal S}_k} \alpha_j p_{kj}$ for $[p_{kj}]$, and sends messages to the servers revealing their next service allocation vector $[p_{kj}]$. Note that the solution to these equations is not unique.
}

The simulation results 
%presented in Subsection \ref{sec:sim} 
are derived using the described algorithm.  
\yuan{To analyze the theoretical performance properties of the algorithm,} we make \yuan{minimal} modifications to the proposed algorithm \yuan{for technical reasons}. First, we assume that a) the nominal arrival rate of all the tasks $\nu_k$ is strictly positive, b) there are finitely many servers in the system, and c) all the service rates, $\mu_{kj}$, are finite. Note that assumptions a), b), and c) 
\yuan{can be made without loss of generality.} Then, there exists $\veps_0 > 0$ such that for all $k$, $\frac{\nu_k}{\mu_k} \geq \veps_0$. We now suppose that $\veps_0$ is known, %\footnote{\yz{Note that the parameter $\veps_0$ can be picked to be arbitrarily close to $0$ but strictly positive. Thus, this minimal modification essentially does not affect the robustness of the algorithm.}}, 
and consider a variant $\mathcal{C}_{\veps_0}$ of the convex set $\mathcal{C}$, 
defined to be

\begin{align}\label{eq:convex-set2}
\mathcal{C}_{\veps_0} = \Bigg \{p \in \mathbb{R}^K : \exists ~ p_{kj} \geq 0 ~\text{such that} ~ \sum_{j \in {\cal S}_k} \alpha_j p_{kj} = p_k ~~\forall k, ~ p_k\geq \veps_0 ~~ \forall k, ~ \sum_{k \in {\cal T}_j} p_{kj} \leq 1 ~~\forall j \Bigg \}.
\end{align}

Note that $p^* \in \mathcal{C}_{\veps_0}$. We modify the projection to be on the set $\mathcal{C}_{\veps_0}$ every time, so that $p^n$ are now updated as 
\begin{align}\label{eq:update2}
p^{n+1}_k = [p^n_k + \beta^n \bOne_{E^n_k} \sum_{(i',i) \in {\cal H}_k}\Delta Q^{n+1}_{(i',i)}]_{\mathcal{C}_{\veps_0}}.
\end{align}

\yz{
\begin{remark}
a) The modified update \eqref{eq:update2} 
ensures that the service effort $\mu_k p^n_k$ allocated to each queue $k$ 
is always strictly between $0$ and $1$, which will imply that all queues 
are non-empty for a positive fraction of the time, so as to guarantee convergence 
of the modified algorithm \eqref{eq:update2}. 
We believe that the original algorithm \eqref{eq:update} converges as well, 
although establishing this fact rigorously appears difficult.

b) Let us also note that the modified algorithm \eqref{eq:update2} is {\em essentially} robust 
in the following sense. 
On the one hand, the update \eqref{eq:update2} assumes the knowledge of $\veps_0$, 
which in turn depends on $\nu$ and $\mu$. 
On the other hand, $\veps_0$ can be chosen with minimal information on 
$\nu$ and $\mu$. For example, if $c$ is a known lower bound on $\nu_k, k = 1, 2, \cdots, K$ 
and $C$ is a known upper bound on $\mu_k, k = 1, 2, \cdots, K$, 
then we can set $\veps_0 = \frac{c}{C}$.
\end{remark}
}

The main results of this section are the following two theorems. 
%Due to lack of space, we only provide 
%their proof sketches. Full proofs are presented in \cite{Ramtin-Website}.

\begin{thm}\label{thm1}
Let $\lambda\in \Lambda$.
The allocation vector $p^n$ updated by Equation \eqref{eq:update2} converges to $p^* = [p^*_k]$ almost surely, where $p^*_k = \frac{\nu_k}{\mu_k}$.
\end{thm}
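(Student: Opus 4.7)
The plan is to interpret \eqref{eq:update2} as a projected stochastic approximation recursion and combine a Robbins--Siegmund supermartingale argument with a uniform ergodicity lemma for the underlying queueing process. Writing $u^n_k := \bOne_{E^n_k}\sum_{(i',i)\in\mathcal{H}_k}\Delta Q^{n+1}_{(i',i)}$, the recursion reads $p^{n+1} = [p^n + \beta^n u^n]_{\mathcal{C}_{\veps_0}}$. The telescoping identity \eqref{eq:unbiased} yields $\EE[u^n_k \mid \mathcal{F}^n] = \bOne_{E^n_k}(\nu_k - \mu_k p^n_k)$, so $M^{n+1}_k := u^n_k - \EE[u^n_k\mid\mathcal{F}^n]$ is a martingale difference. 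Since arrivals and single-slot departures are $0/1$ and $|\mathcal{H}_k|$ is bounded, $\|u^n\|$ is uniformly bounded by some constant $C_0$.

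Let $V_n := \|p^n - p^*\|^2$ with $p^*_k = \nu_k/\mu_k \in \mathcal{C}_{\veps_0}$, and note $\nu_k - \mu_k p^n_k = -\mu_k(p^n_k - p^*_k)$. Because the projection onto $\mathcal{C}_{\veps_0}$ is non-expansive and $p^* \in \mathcal{C}_{\veps_0}$, expanding $\|p^n + \beta^n u^n - p^*\|^2$ and taking the conditional expectation yields
\begin{align*}
\EE[V_{n+1}\mid\mathcal{F}^n] \leq V_n - 2\beta^n \sum_k \mu_k \bOne_{E^n_k}(p^n_k - p^*_k)^2 + C_0^2(\beta^n)^2.
\end{align*}
Since $\sum_n(\beta^n)^2<\infty$, the Robbins--Siegmund lemma gives both $V_n \to V_\infty$ almost surely for some $V_\infty\geq 0$ and the pathwise summability $\sum_n \beta^n\sum_k \bOne_{E^n_k}(p^n_k - p^*_k)^2 < \infty$ a.s.

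It remains to show $V_\infty = 0$ a.s., and the key ingredient is a uniform ergodicity property: for every fixed $p\in\mathcal{C}_{\veps_0}$, the stationary probability of the event $E_k$ under the queueing Markov chain driven by this allocation is bounded below by a constant $\pi_{\min}>0$ depending only on $\mu,\nu,\veps_0$. Each parent queue $(k',k)$ has arrival rate $\nu_k$ and service rate at most $\mu_k p^{\max}_k$, where $p^{\max}_k := \sup\{p_k : p\in\mathcal{C}_{\veps_0}\} < \infty$ by compactness; an explicit M/M/1-style estimate then supplies $\pi_{\min}$. Because $p^{n+1}-p^n = O(\beta^n)\to 0$ and the step-size condition $\liminf_n 1/(n\beta^n)<\infty$ provides windows of length $o(1/\beta^n)$ over which $p^n$ is nearly constant, a coupling between the true chain $\{Q^n\}$ and the frozen-$p^N$ chain shows that the empirical fraction $N^{-1}\sum_{n=1}^N\bOne_{E^n_k}$ is eventually at least $\pi_{\min}/2$ along every sample path. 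Combined with the Robbins--Siegmund summability and $\sum_n\beta^n=\infty$, this forces $(p^n_k-p^*_k)^2\to 0$ for every $k$, hence $V_\infty = 0$ and $p^n\to p^*$ almost surely.

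The delicate step is the uniform ergodicity bound. For non-root tasks $\bOne_{E^n_k}$ depends on all parent queues simultaneously, and the lower bound must hold pathwise, not merely in expectation. I plan to construct a Foster--Lyapunov function on the queueing state space with a drift that is uniformly negative over $p\in\mathcal{C}_{\veps_0}$, yielding uniform geometric ergodicity and explicit control on the return times to $\{E_k\}$; the slowly-varying nature of $p^n$ then enables a standard coupling in the spirit of stochastic approximation with Markovian noise (cf.\ Chapter~6 of \cite{Borkar2008}). The boundary of $\mathcal{C}_{\veps_0}$ requires no separate treatment, because the contractive estimate $V_{n+1}\leq\|p^n+\beta^n u^n - p^*\|^2$ absorbs the reflection term.
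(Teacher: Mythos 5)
Your opening Robbins--Siegmund step is a legitimate repackaging of the paper's argument: non-expansiveness of the projection, the unbiasedness identity \eqref{eq:unbiased}, and bounded increments give exactly the supermartingale inequality you write, and Robbins--Siegmund then yields $V_n \to V_\infty$ together with $\sum_n \beta^n \sum_k \mu_k \bOne_{E^n_k}(p^n_k - p^*_k)^2 < \infty$ a.s. The paper achieves the same thing by a more hands-on trichotomy (properties (i) and (ii) of $d_n = \|p^n - p^*\|^2$, plus the martingale convergence theorem for the noise), but these are interchangeable.

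However, the part of the proof you need to close the argument has two genuine problems. First, the drift term you retain involves the \emph{individual} indicators $\bOne_{E^n_k}$. Summability $\sum_n \beta^n \bOne_{E^n_k}(p^n_k - p^*_k)^2 < \infty$, together with positive empirical density of $\{E^n_k\}$ and $\sum_n\beta^n = \infty$, only gives a $k$-dependent subsequence along which $(p^n_k - p^*_k)^2 \to 0$; this does \emph{not} give a common subsequence on which the whole sum $\sum_k(p^n_k-p^*_k)^2 \to 0$, so you cannot conclude $V_\infty = 0$. The paper handles this by bounding the drift term from below via $E^n := \bigcap_k E^n_k \subset E^n_k$ and working with the \emph{joint} indicator $\bOne_{E^n}$, then proving that $\bOne_{E^n}$ has positive empirical density. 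You would need to do the same.

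Second, and more seriously, the proposed uniform-ergodicity / Foster--Lyapunov lemma is false. At the target allocation $p^* \in \mathcal{C}_{\veps_0}$ every queue has service rate $\mu_k p^*_k = \nu_k$ exactly equal to its nominal arrival rate, so the frozen chain is null recurrent: it has no stationary distribution, hence no $\pi_{\min}$ and certainly no geometric ergodicity. For $p \in \mathcal{C}_{\veps_0}$ with $\mu_k p_k < \nu_k$ for some $k$ (such $p$ are allowed, since the only lower bound is $p_k \geq \veps_0$) the queues are transient. So there is no Lyapunov function with uniformly negative drift over $\mathcal{C}_{\veps_0}$, and the Markovian-noise coupling machinery you invoke has no foundation to stand on. The paper's Lemma \ref{LEM:PFRAC} sidesteps all of this: it is a pure \emph{minorization} statement, that from \emph{any} history $\mathcal{F}^n$, in $\ell$ steps the joint event $E^{n+\ell}$ occurs with probability at least $\delta_0$, uniformly. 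This uses only that $\veps_0 \le p^n_k$ and $\mu_k p^n_k < 1$ (so any pattern of serve / not-serve / arrival over $\ell$ slots has probability bounded below) and an explicit layer-by-layer construction. A martingale law of large numbers (Lemma \ref{lem:coupling}) then gives $\liminf_n \frac{1}{n}\sum_{n'\le n}\bOne_{E^{n'}} \ge \delta_0/\ell$ pathwise, with no need for a slowly-varying or frozen-chain coupling. You should replace the ergodicity lemma with this minorization argument; the rest of your framework then goes through.
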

The proof of Theorem \ref{thm1} is provided in Appendix \ref{apdx:proof-thm1}. 
Here, we briefly describe the main steps of the proof. 
First, we show that the non-stochastic gradient projection algorithm with the skewed update \eqref{eq:skewed-update} converges. This is not true in general, but \yuan{the convergence holds here, due to the form of} the objective function in \eqref{eq:optimization}, 
which is the sum of separable quadratic terms. 
Second, we show that the cumulative stochastic noise present in the update due to the error in estimating the correct drift is an $L_2$-bounded martingale. Thus, by martingale convergence theorem the cumulative noise converges and has a vanishing tail. This shows that after some time the noise becomes negligible. Finally, we prove that the event that all queues in the network are non-empty happens 
for a positive fraction of time. Intuitively, this suggests that the algorithm is updating ``often enough'' 
to be able to converge. The rigorous justification makes use of Kronecker's lemma \cite{Durrett}.

\begin{thm}\label{thm2}
Let $\lambda \in \Lambda$.
The queueing network representing the DAGs is rate stable under the proposed scheduling policy, i.e.
$
\lim_{n \to \infty} \frac{Q^n_{(k',k)}}{n} = 0, ~a.s., ~\forall (k',k).
$
\end{thm}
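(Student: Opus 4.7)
The plan is to deduce rate stability from Theorem \ref{thm1}'s almost sure convergence $p^n \to p^* = (\nu_k/\mu_k)_k$. I proceed by induction on the longest path length $L_k$, establishing simultaneously that $Q^n_{(k',k)}/n \to 0$ for all edges $(k',k)$ and $D^n_k/n \to \nu_k$ for all nodes $k$, where $A^n_m = \sum_{t < n} a^t_m$ and $D^n_k = \sum_{t < n} d^t_k$ denote cumulative arrivals and departures.

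\textbf{Base case ($L_k = 0$).} The root queue $Q^n_{(0,k)}$ has iid Bernoulli$(\nu_k)$ arrivals and service rate $\mu_k p^n_k \to \nu_k$ almost surely (by Theorem \ref{thm1}) when non-empty. I use a monotone coupling: for any $\epsilon > 0$, there is a random time $N$ after which $\mu_k p^n_k \geq \nu_k - \epsilon$; couple $Q^n_{(0,k)}$ with a virtual single-server queue $\hat Q^n$ driven by the same arrival and service uniforms but with constant service probability $\nu_k - \epsilon$ when non-empty. Standard queue monotonicity gives $Q^n_{(0,k)} \leq \hat Q^n$ for $n \geq N$ (initialized at $\hat Q^N := Q^N_{(0,k)}$). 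Since $\hat Q^n$ has positive drift $\epsilon$, the SLLN gives $\hat Q^n/n \to \epsilon$ almost surely, hence $\limsup Q^n_{(0,k)}/n \leq \epsilon$. Letting $\epsilon \downarrow 0$ along a countable sequence yields $Q^n_{(0,k)}/n \to 0$ almost surely. Flow balance $D^n_k = Q^0_{(0,k)} - Q^n_{(0,k)} + A^n_{m(k)}$ together with the SLLN for Bernoulli arrivals then gives $D^n_k/n \to \nu_k$.

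\textbf{Inductive step ($L_k \geq 1$).} Assume $D^n_{k'}/n \to \nu_{k'} = \nu_k$ almost surely for each $k' \in \mathcal P_k$. Non-negativity $Q^n_{(k',k)} = Q^0_{(k',k)} + D^n_{k'} - D^n_k \geq 0$ immediately yields $\limsup D^n_k/n \leq \nu_k$. For the matching lower bound I invoke a fluid-limit argument. Define the scaled processes $\bar Q^r_{(k',k)}(t) = Q^{\lfloor rt \rfloor}_{(k',k)}/r$ and $\bar D^r_k(t) = D^{\lfloor rt \rfloor}_k/r$ on $[0, T]$. Bernoulli increments ensure uniform Lipschitz continuity, so the family is precompact (Arzela-Ascoli). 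Combining Theorem \ref{thm1} with the martingale SLLN for the departure noise, any subsequential limit $(\bar Q, \bar D)$ satisfies
\begin{align*}
\bar Q_{(k',k)}(t) &= \bar D_{k'}(t) - \bar D_k(t), \qquad 0 \leq \dot{\bar D}_k(t) \leq \nu_k, \\
\dot{\bar D}_k(t) &= \nu_k \quad \text{whenever } \min_{k' \in \mathcal P_k} \bar Q_{(k',k)}(t) > 0.
\end{align*}
Starting from $\bar Q(0) = 0$ (since $Q^0$ is bounded), these equations force $\bar Q \equiv 0$ and $\bar D_k(t) = \nu_k t$: if $\bar Q_{(k',k)}(t_1) > 0$ at some $t_1$, then on the maximal interval $(t_0, t_1]$ on which $\bar Q_{(k',k)} > 0$ (with $\bar Q_{(k',k)}(t_0) = 0$ by continuity) the full-service condition forces $\dot{\bar Q}_{(k',k)} = \nu_k - \nu_k = 0$, contradicting the choice of $t_0$. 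Hence $\bar Q \equiv 0$, which yields $Q^n_{(k',k)}/n \to 0$ almost surely and, via flow balance, $D^n_k/n \to \nu_k$.

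\textbf{Main obstacle.} The principal technical difficulty lies in the inductive step: task $k$ can be served only when \emph{all} parent queues are simultaneously non-empty, so the parent dynamics are intrinsically coupled through the synchronization event $E^n_k$. Rate stability of each parent queue in isolation does not, a priori, imply that $E^n_k$ holds for a density-one set of times. The fluid-model argument circumvents this by using the full-service condition at the balance point, but making it rigorous demands careful treatment of queues touching $0$ and of the almost-sure convergence of the allocation process in the fluid scale. The structure is nevertheless clean, precisely because Theorem \ref{thm1} drives $p^n$ exactly to $p^*$ where per-node inflows and outflows match.
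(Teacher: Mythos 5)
Your proposal is substantively correct and hinges on the same two pillars as the paper's proof: Theorem~\ref{thm1} (almost sure convergence of $p^n$ to the balancing allocation $p^*$) and induction on the longest-path length $L_k$. The technical machinery differs, though. The paper proves Lemma~\ref{LEM:LIMS} (that the cumulative \emph{virtual} service satisfies $S^n_k/n\to\nu_k$ a.s., via coupling with Bernoulli processes) and then Lemma~\ref{lem:rate}, a purely elementary $\liminf/\limsup$ contradiction argument: first it shows the differences $Q^n_{(i,k)}-Q^n_{(i',k)}$ are $o(n)$ between any two parent queues, then derives $\liminf Q^n_{(k',k)}/n=0$ and $\limsup Q^n_{(k',k)}/n=0$ separately by contradiction, using a carefully chosen bridge process $B^n_{(k',k)}=D^n_{k'}-S^n_k$. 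Your base case uses a dominating-queue coupling rather than the paper's virtual-service coupling; both are fine. Your inductive step replaces the paper's discrete $\liminf/\limsup$ argument with a fluid-limit argument. The fluid route is conceptually cleaner but requires the formal apparatus (Lipschitz precompactness, FSLLN for the martingale noise, translation of the a.s.\ convergence $p^n\to p^*$ into the fluid equation, behavior near $\bar Q=0$), all of which you gesture at but do not carry out. The paper's argument avoids this by working directly at the pre-limit.

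One point you should make explicit: in your inductive contradiction you invoke ``the full-service condition'' on the interval where $\bar Q_{(k',k)}>0$, but full service to task $k$ requires \emph{all} parent fluid queues $\bar Q_{(i,k)}$, $i\in\mathcal{P}_k$, to be positive simultaneously, not just $(k',k)$. This does hold, because the inductive hypothesis gives $\bar D_i(t)=\nu_k t$ for every $i\in\mathcal{P}_k$, hence $\bar Q_{(i,k)}(t)=\nu_k t-\bar D_k(t)$ is the same for all parents and they are positive together or zero together. This is precisely the role of equation~\eqref{eq:qd} in the paper's Lemma~\ref{lem:rate}, which establishes the pre-limit version of this synchronization ($|Q^n_{(i,k)}-Q^n_{(i',k)}|=o(n)$) before either of the contradiction arguments is run. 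Without that observation, the step from ``$\bar Q_{(k',k)}(t)>0$ on $(t_0,t_1]$'' to ``$\dot{\bar D}_k(t)=\nu_k$'' does not follow. Once you add that observation, your fluid proof is a valid alternative to the paper's.
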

The proof of Theorem \ref{thm2} is provided in Appendix \ref{apdx:proof-thm2}. While proving the theorem is technically quite involved, the key idea is to use Theorem \ref{thm1} to prove that the servers allocate enough cumulative capacity to all the tasks in the \yuan{system}, which leads to rate stability of the network. 

%\ramtin{
%\begin{remark}
%We present extensive numerical studies and simulations results for our robust scheduling policy in the extended version of the paper \cite{PWZ-Arxiv}.
%\end{remark}
%}

\subsection{Simulations}\label{sec:sim}
In this section, we show the simulation results and discuss the performance of the robust scheduling algorithm. Consider the DAG shown in Figure \ref{fig:DAG5}. We assume that the system has 
1 type of jobs with arrival rate $\lambda = 0.23$. The task service rates are
$$\mu_1 = 1, \mu_2 = 4/3,  \mu_3 = 2, \mu_4 = 1/2 \text{ and } \mu_5 = 2/3.$$
The server speeds are
$
\alpha_1 = 1 \text{ and } \alpha_2 = 1/2,
$
and ${\cal T}_1 = \{1, 4, 5\}$ and ${\cal T}_2 = \{2, 3, 4\}$.
The step size of the algorithm is chosen to be $\beta^n = \frac{1}{n^{0.6}}$ and the initial queue lengths are $[0,0,0]$. From \eqref{eq:LP}, one can compute that the capacity region $\Lambda$ is $\{\lambda\geq 0: \lambda \leq \frac{6}{23}\}$.  

\begin{figure}
\centering
    \includegraphics[width= 0.35\textwidth]{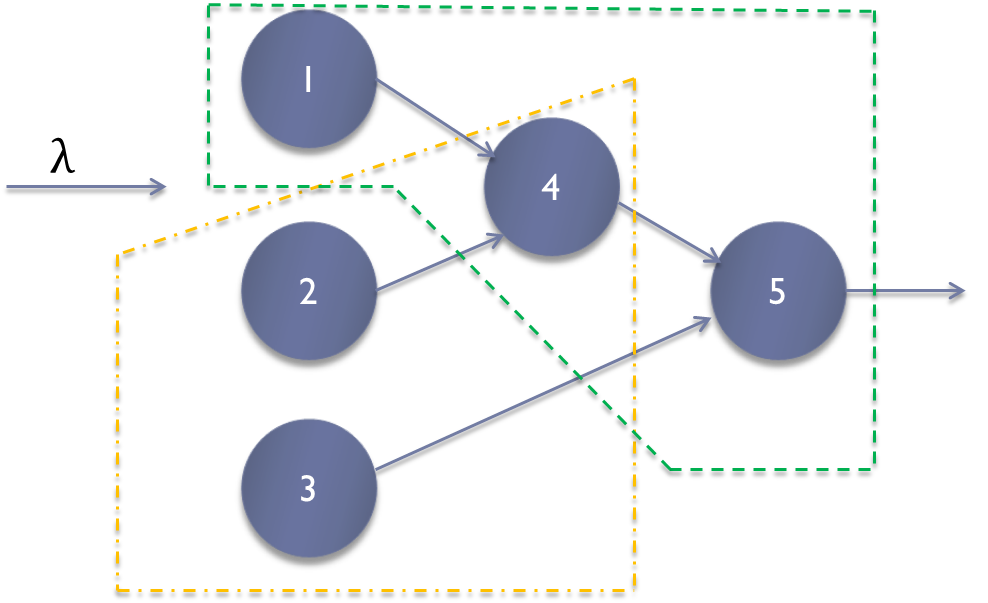}
  \caption{\label{fig:DAG5}DAG of 5 tasks} 
\end{figure}

First we demonstrate that our proposed algorithm is throughput optimal and makes the queues stable. Figure \ref{fig:sim1} illustrates the queue-lengths as a function of time. Moreover, Figure \ref{fig:sim2} shows how vector $p^n$ converges to the flow-balancing values as Theorem \ref{thm1} states.  Figure \ref{fig:sim1} suggests that queues in the network become empty infinitely often, hence are stable. However, the average queue-length is quite large, so the algorithm suffers from bad delay. The reason is that the allocation vector $p^n$ is converging to the value that equalizes the arrival and service rates of all the queues. As an example, if we consider 
a system with a single-node DAG of arrival rate $\lambda$ and a single server of service rate $\mu$, the queueing network reduces to the classical M/M/1 queue. Theorem \ref{thm1} shows that the service capacity that this queue receives, $p^n$, converges to $\frac{\lambda}{\mu}$. It is known that if the arrival rate of an M/M/1 queue is equal to its service rate, the underlying Markov chain describing the queue-length evolution is null-recurrent, and the queue suffers from large delay. %\yuan{(what does the following sentence mean? 
%I would suggest removing it, which does not change the flow of the paper.) It is tempting to think that one solution in general case is to re-allocate the unused capacity of a server (proportional to the allocation vector) to the queues. However, in this case, the dynamics of $p^n$ becomes intractable, and there is no guarantee that the allocation vector converges to a vector that stabilizes system.} 
In the following, we propose a modified version of the algorithm that reduces the delays. \\ \\
\begin{figure}
        \centering
        \begin{subfigure}{0.45\textwidth}
 			\includegraphics[width= 4.8cm, height = 3.6cm]{}
		 	\caption{\label{fig:sim1}Queue-lengths vs. time} 
        \end{subfigure}
        \qquad
        \begin{subfigure}{0.45\textwidth}
         \includegraphics[width= 4.8cm, height = 3.6cm]{}
			 \caption{\label{fig:sim2}Allocation vector $p^n$ vs. time}
        \end{subfigure}
        \vskip\baselineskip
        \begin{subfigure}{0.45\textwidth}
 			\includegraphics[width= 4.8cm, height = 3.6cm]{}
  \caption{\label{fig:sim3}Queue-length of queue 4 for the modified algorithm vs. time for $\delta = 0.02$.} 
        \end{subfigure}
        \qquad
        \begin{subfigure}{0.45\textwidth}
			\includegraphics[width= 4.8cm, height = 3.6cm]{}
  			\caption{\label{fig:sim4}Queue-length evolution of queue 4 in the time-varying bursty case.} 
        \end{subfigure}
        
        \caption{This figure shows the simulation results for the DAG of Figure \ref{fig:DAG5}.}\label{fig:DAG-sim}
\end{figure}
%\begin{figure}
%\centering
%    \includegraphics[width= 0.28\textwidth]{}
%  \caption{\label{fig:sim1}Queue-lengths vs. time} 
%\end{figure}
%\begin{figure}
%\centering
%    \includegraphics[width= 0.28\textwidth]{}
%  \caption{\label{fig:sim2}Allocation vector $p^n$ vs. time} 
%\end{figure}
{\bf Modified scheduling algorithm to improve delays.}
As discussed in the previous section, the allocation vector $p^n$ converges to the value that just equalizes the arrival rate and the effective service rate that each tasks receives. To improve the delay of the system, one wants to allocate strictly larger service rate to each task than the arrival rate. This is possible only if the arrival vector $\lambda$ is in the interior of the capacity region. In this case, there exists some $\delta > 0$ and an allocation vector $p^*$ such that $\nu_k \leq -\delta + \mu_k p^*_k$ for all $k$. 

Thus, assuming that $\delta$ is known, we minimize the function 
$$
V(p) = \sum_{k=1}^K (\nu_k + \delta - \mu_k p^*_k)^2,
$$
by stochastic gradient. Similarly to the proof of Theorem \ref{thm1}, one can show that $p^n_k$ converges to $\frac{\nu_k + \delta}{\mu_k}$. With this formulation of the optimization problem, the update equation for the new scheduling algorithm is 
\begin{align*}
p^{n+1}_k = [p^n_k + \delta + \beta^n \bOne_{E^n_k} \sum_{(i',i) \in {\cal H}_k}\Delta Q^{n+1}_{(i',i)}]_{\mathcal{C}},
\end{align*}
which is similar to \eqref{eq:update} with an extra $\delta$-slack. 

We consider the same setting and network parameters as the previous section, and simulate the modified algorithm using $\delta = 0.02$. Figure \ref{fig:sim3} demonstrates a substantial reduction in the queue-length of queue 4 and the delay performance of the algorithm. Similar plots can be obtained for queue-lengths of other queues, which we omit to avoid redundancy.\\ \\
%\begin{figure}
%\centering
%    \includegraphics[width= 0.28\textwidth]{}
%  \caption{\label{fig:sim3}Queue-length of queue 4 for the modified algorithm vs. time for $\delta = 0.03$.} 
%\end{figure}
{\bf Time-varying demand and service and bursty arrivals.}
In Section \ref{sec:intro}, we %introduced the highly heterogeneous environment of large-scale processing networks such as data centers as our main motivation for designing robust policies. We 
mentioned that bursty arrivals as well as time-varying service and arrival rates make estimation of parameters of the system very difficult and often inaccurate, 
and used this reason as a main motivation for designing robust scheduling policies. However, the theoretical results are provided for a time-invariant system with memoryless queues. In this section, we investigate the performance of our proposed algorithm in a time-varying system with bursty arrivals. 

Consider the same DAG structure of previous simulations and the same server rates. We model the burstiness of demand as follows. At each time slot, a batch of $B$ jobs arrive to the system with probability $\lambda/B$. To simulate a time-varying system, we consider two modes of network parameters. In the first mode, arrival rate is $\lambda = 1/5$, and task service rates are
$$\mu_1 = 1, \mu_2 = 4/3,  \mu_3 = 2, \mu_4 = 1/2 \text{ and } \mu_5 = 2/3.$$
In the second mode, arrival rate is $\lambda = 1/6$, and task service rates are
$$\mu_1 = 1/2, \mu_2 = 2,  \mu_3 = 1, \mu_4 = 2/5 \text{ and } \mu_5 = 1.$$
Note that the capacity region of mode $2$ is $\lambda < 3/14$. We simulate a network that changes mode every $T$ time slots. 

Figure \ref{fig:sim4} illustrates the queue-length of the queue 4 versus time when the system has parameters $T = 1000$ %\yuan{(Question: what is the value of $T$ at which the algorithm fails to be stable?)} 
and $B=5$, and $\delta = 0.02$. As one expects, the bursty time-variant system suffers from larger delay. However, as the simulation shows the queue is still stable, and the gradient algorithm is able to track the changes in the network parameters. 

%\begin{figure}
%\centering
%    \includegraphics[width= 0.28\textwidth]{}
%  \caption{\label{fig:sim4}Queue-length evolution of queue 4 in the time-varying bursty network.} 
%\end{figure}

\subsection{Unstable Network with Generic Service Rates}\label{sec:X}
In this subsection, we first propose a \yuan{natural extension} of the robust algorithm for the case that service rates are generic, 
using an approach that is similar to the design of policy when service rates satisfy Assumption \ref{asmp:factor}. 

Define the allocation vector to be $p = [p_{kj}]$. Similar to before, the algorithm tries to minimize $\sum_{k=1}^K (\nu_k - \sum_{j=1}^J \mu_{kj} p_{kj})^2$ using gradient method. Then, a non-robust update of the allocation vector would be
\begin{align}\label{eq:X1}
p^{n+1}_{kj}  = [ p^n_{kj} + \beta^n \mu_{kj} ( \nu_k - \sum_{j=1}^J \mu_{kj} p^n_{kj})]_{\mathcal{C}}.
\end{align}
A robustified version of the update is 
\begin{align}\label{eq:updateX}
p^{n+1}_{kj} = [p^n_{kj} + \beta^n \bOne_{E^n_k} \sum_{(i',i) \in {\cal H}_k}\Delta Q^{n+1}_{(i',i)}]_{\mathcal{C}}.
\end{align}
We now demonstrate that the \yuan{extension} of \yuan{our} robust algorithm \yuan{need not be} throughput-optimal via a simple example. We consider a specific queueing network known in the literature as ``X" model \cite{BT2011}, shown in Figure \ref{fig:X} with generic service rates $\mu_{kj}$ that cannot be factorized into $2$ factors $\mu_k$ and $\alpha_j$. In our setting, the queueing network is equivalent to having $2$ types of DAGs, each of them consisted of a single task with different service characteristics. There are $2$ servers in the system. The network parameters are as follows.
$$
\lambda_1 = \lambda_2 = 0.3, ~\mu_{11} = \mu_{22} = 1/8, ~\text{and} ~ \mu_{12}=\mu_{21} = 3/8.
$$
\begin{figure}
\centering
    \includegraphics[width= 0.35\textwidth]{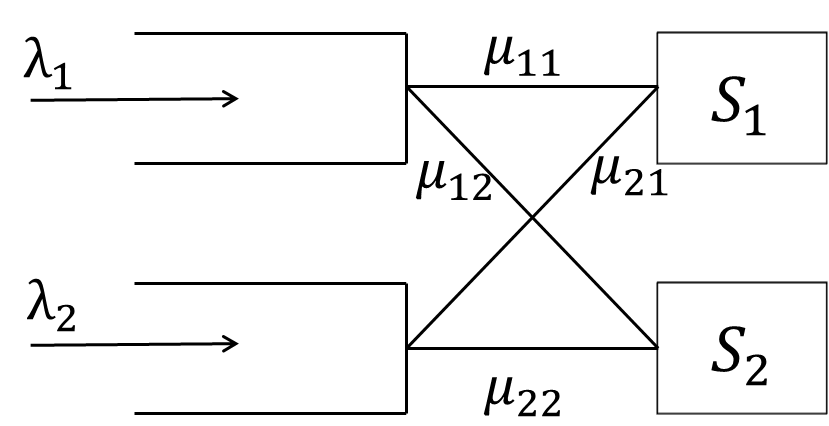}
  \caption{\label{fig:X}The X model} 
\end{figure}
It is easy to check that the stability region is $\lambda_1 \leq 3/8$ and $\lambda_2 \leq 3/8$, which is achieved by server $1$ always working on task $2$ and server $2$ always working on task $1$. Let $p = [p_{11},p_{12},p_{21},p_{22}]$ be the allocation vector for this example. Then, the update is
\begin{align*}
\left[ \begin{array}{cc} 
p^{n+1}_{1j}\\
p^{n+1}_{2j} 
\end{array}
\right] = \left[ \begin{array}{cc} 
p^n_{1j} + \beta^n \bOne_{E^n_1} \Delta Q^{n+1}_1 \\
p^n_{2j} + \beta^n \bOne_{E^n_2} \Delta Q^{n+1}_2
\end{array}
\right]_{\mathcal{C}}  
\end{align*} 
for $j=1,2$. We remark that the update of allocation vector is identical for both servers. Thus, it is expected that the allocation vector converges to $p_{kj} = 1/2$ for all $k \in \{1,2 \}$ and $j \in \{ 1,2 \}$, due to symmetry. The simulation results confirm that our scheduling policy is not stable with these network parameters, since the allocation vector converges to $[1/2,1/2,1/2,1/2]$. In this simulation, we set $p^0_{kj} = 0.1$ for all $k$ and $j$. In general, the reason \yuan{for the convergence} to a \yuan{sub-optimal} allocation vector for generic $\mu_{kj}$ is that the skewed gradient projection (after dropping the term $\mu_{kj}$ from \eqref{eq:X1} to \eqref{eq:updateX}) does not converge, even without noise.

In general, it would be interesting to find out whether there exists a robust scheduling policy that stabilizes the X model. Due to the underlying symmetry of the problem, it is reasonable to conjecture that no myopic queue-size policy\footnote{These are scheduling policies that are only a function of the current queue sizes of the network.} can be throughput-optimal in this example.
\begin{figure}
        \centering
        \begin{subfigure}{0.4\textwidth}
 			\includegraphics[width= 4.8cm, height = 3.6cm]{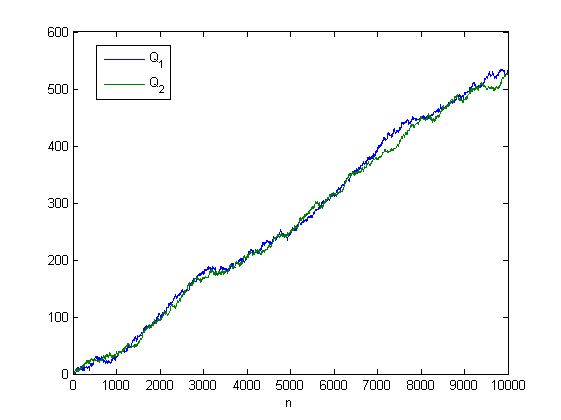}
  \caption{\label{fig:X1}The robust scheduling policy is unstable for the X model.} 
        \end{subfigure}
        \qquad
        \begin{subfigure}{0.4\textwidth}
         \includegraphics[width= 4.8cm, height = 3.6cm]{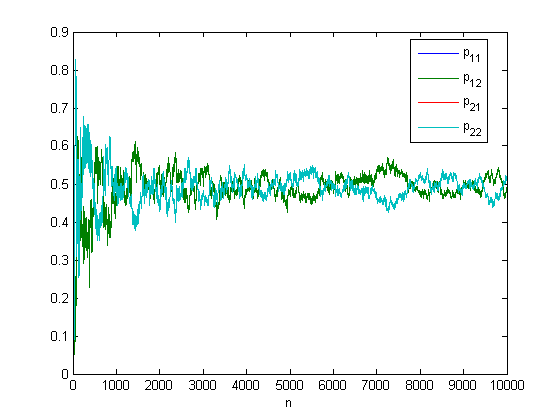}
  		\caption{\label{fig:X2}Allocation vector $p^n$ converges to $[1/2,1/2,1/2,1/2]$.}
        \end{subfigure}        
        \caption{This figure shows the simulation results for the X model.}\label{fig:Xsim}
\end{figure}

%\begin{figure}
%\centering
%    \includegraphics[width= 0.28\textwidth]{X.png}
%  \caption{\label{fig:X1}Unstable queues in X model} 
%\end{figure}
%\begin{figure}
%\centering
%    \includegraphics[width= 0.28\textwidth]{Xp.png}
%  \caption{\label{fig:X2}Allocation vector $p^n$ converges to $[1/2,1/2,1/2,1/2]$.} 
%\end{figure}

\section{Flexible Queueing Network}\label{sec:FQN}
In this section, we consider a different queueing network model, and show that our robust scheduling policy can also be applied to this network. 
\subsection{Network Model}
We consider a flexible queueing network with $K$ queues and $J$ servers, 
and probabilistic routing. 
Servers are \emph{flexible} in the sense that each server can serve 
a (non-empty) set of queues. Similarly, 
tasks in each queue are \emph{flexible},
so that each queue can be served by a set of servers. 
Similar to the DAG scheduling model, for each $j$, let $\mathcal{T}_j$ be the set of queues that server $j$ can serve, 
let $T_j = | \mathcal{T}_j |$. 
For each $k$, let $\mathcal{S}_k$ be the set of servers 
that can serve queue $k$, and let $S_k = |\mathcal{S}_k |$. 
Clearly, $\sum_{j=1}^J T_j = \sum_{k=1}^K S_k$, \yuan{and we denote this sum by $S$}. 
Without loss of generality, we assume that each server can serve at least one queue, 
and each queue can be served by at least one server.

We suppose that each queue has a dedicated exogenous arrival process (with rates being possibly zero). 
For each $k$, suppose that arrivals to queue $k$ form an independent Bernoulli process 
with rate $\lambda_k \in [0,1]$. 
Thus, in each time slot, there is exactly one arrival to queue $k$ with probability $\lambda_k$, 
and no arrival with probability $1-\lambda_k$.
Let $A_k(t)$ be the cumulative number of exogenous arrivals to queue $k$ up to time $t$. 
The routing structure of the network is described by the matrix $R = [r_{k'k}]_{1 \leq k',k \leq K}$, 
where $r_{k'k}$ denotes the probability that 
a task from queue $k'$ joins queue $k$ after service completion. 
The random routing is i.i.d. over all time slots. 
We assume that the network is \emph{open}, i.e., 
all tasks eventually leave the system. 
This is characterized by the condition that $(I - R^T)$ is invertible, 
where $I$ is the identity matrix, \yuan{and $R^T$ is the transpose of $R$.}
\begin{example}
To clarify the network model, we consider a flexible queueing network shown in Figure \ref{fig:FQN1}.
For concreteness, we can think of this system as a multi-tier application \cite{eurosys09} with two flexible servers
(the two boxes), and one type of application with three tiers in succession (the three queues). When a task is processed at queue 2, it will join queue 3 with probability $r_{23}$ and it will join queue 1 with probability $r_{21} = 1 - r_{23}$ (that can be thought of as the failure probability in processing queue 2). This network is different from the classical open multiclass queueing networks, in that queue 2 can be served by 2 servers. In this network $\mathcal{T}_1 = \{ 1,2 \}$ and $\mathcal{T}_2 = \{ 2,3 \}$, $\mathcal{S}_1 = \{ 1 \}$, $\mathcal{S}_2 = \{ 1,2 \}$, and $\mathcal{S}_3 = \{ 2 \}$. 
\end{example}

\begin{figure}
\centering
    \includegraphics[width= 0.55\textwidth]{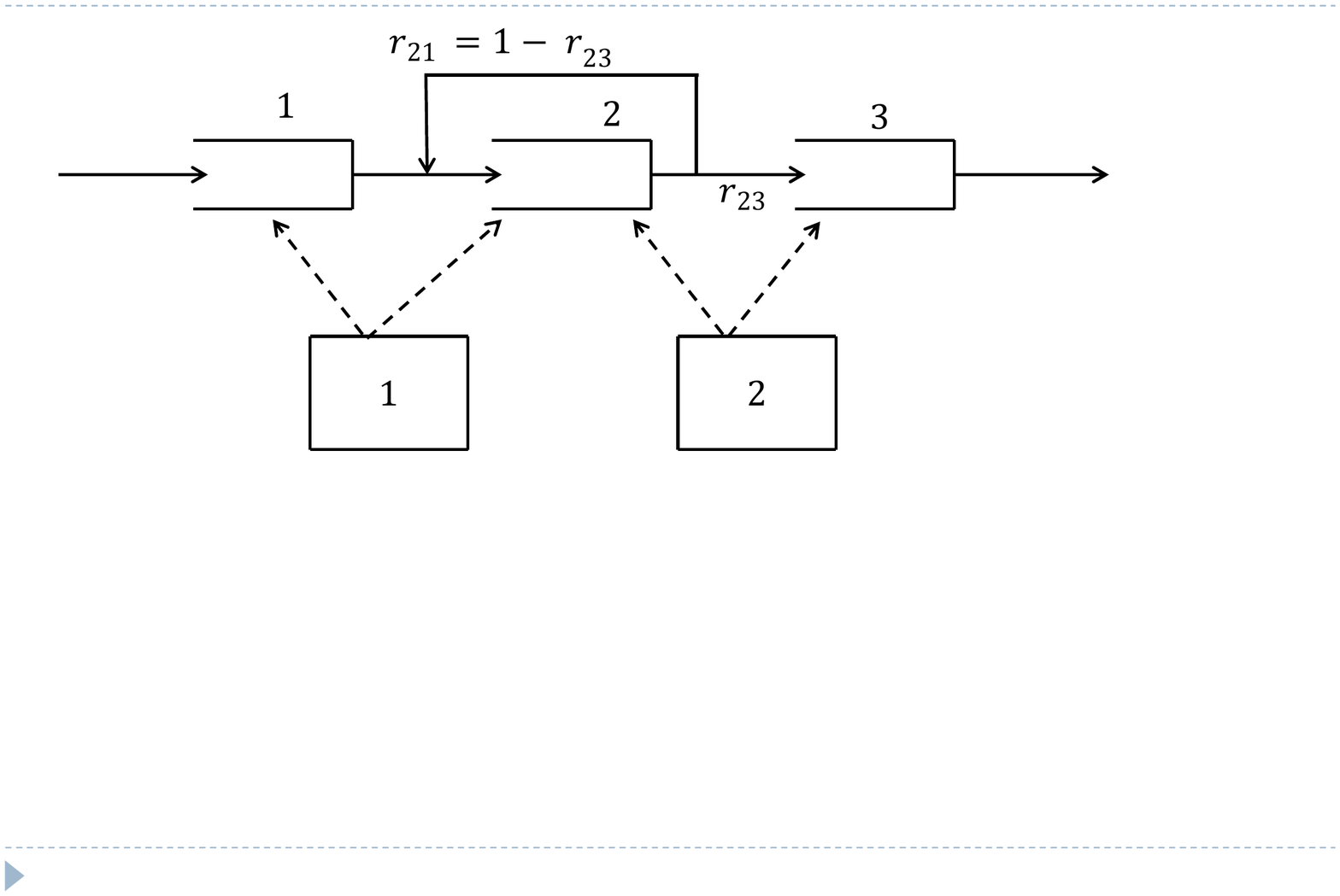}
  \caption{\label{fig:FQN1}Flexible queueing network with 3 queues and 2 servers.}
\end{figure}

\yuan{Similar to the DAG scheduling problem,}
we assume that several servers can work simultaneously on the same task, 
so that their service capacities can be added. 
In each time slot, if a task in queue $k$ is served exclusively by server $j$, 
then the task departs from queue $k$ with probability $\mu_{kj} = \mu_j \alpha_j$, where $\mu_k$ is the service rate of queue $k$ and $\alpha_j$ is the speed of server $j$. 

The dynamics of the flexible queueing network can be \yuan{described} as follows. Let $Q^n_k$ be the length of queue $k$ at time $n$. Let $d^n_k \in \{ 0,1 \}$ be the number of tasks that depart queue $k$ at time $n$. Let $a^n_k \in \{0,1 \}$ be the number of exogenous arrivals to queue $k$ at time $n$. Finally, let $\bOne ^n_{k \to k'}$ be the indicator that the task departing queue $k$ at time $n$ (if any) is destined to queue $k'$. Then the queue dynamics is
\begin{align}\label{eq:FQNdynamic}
Q^{n+1}_k = Q^{n}_k + a^n_k + \sum_{k'=1}^K d^n_{k'} \bOne ^n_{k' \to k}  - d^n_k. 
\end{align}
Note that $\mathbb{E}(a^n_k)= \lambda_k$ and $\mathbb{E}(\bOne ^n_{k' \to k}) = r_{k'k}$.

Similar to the DAG scheduling problem, we define the  allocation vector $p =[p_{kj}]$ (of server capacities), and $p$ is called \emph{feasible} if
\begin{align}\label{eq:feasible2}
\sum_{k \in \mathcal{T}_j} p_{kj} \leq 1, ~ \forall ~ 1 \leq j \leq J.
\end{align}
%We interpret allocation vectors as randomized scheduling decisions in the following manner.
%First, without loss of generality, the system parameters can always be re-scaled
%so that $\sum_{j \in \mathcal{S}_k} \mu_{kj} \leq 1$ for all $k$,
%by speeding up the clock of the system.
%Now suppose that at the current time slot,
%the allocation vector is $p$.
For each $j$, $p_{kj}$ can
be interpreted as the probability that server $j$
decides to work on queue $k$.
Then, the head-of-the-line task in queue $k$
is served with probability $\sum_j \mu_{kj} p_{kj}$.
Similar to the DAG model, we scale the service rates so that $\sum_j \mu_{kj} p_{kj} < 1$
for any feasible $p$. 

We now introduce the linear program (LP) that characterizes the capacity region of
the flexible queueing network.
Toward this end, for a given arrival rate vector $\lambda$,
we first find the \emph{nominal} traffic rates
$\nu = [\nu_k]_{1\leq k \leq K} \in \mathbb{R}^K$,
where $\nu_k$ is the long-run average total rate at which tasks arrive to queue $k$.
For each $k$, $\nu_k = \lambda_k + \sum_{i=1}^K \nu_i r_{ik}$.
Thus, we can solve $\nu$ in terms of $R$ and $\lambda$:
\begin{align}\label{rate}
\nu = (I-R^T)^{-1} \lambda.
\end{align}
Note that Eq. \eqref{rate} is valid, since by 
\yz{our assumption that the network is open}, $(I - R^T)$ is invertible.
The LP is then defined as follows.
\begin{eqnarray}
\text{Minimize} & & ~~\rho \label{eq:FQNLP} \\
\text{subject to} & & ~\nu_k \leq \sum_{j \in {\cal S}_k} \mu_{kj} p_{kj}, ~ \forall ~ 1 \leq k \leq K, \label{eq:FQNservice-arrival} \\
 & & ~~\rho \geq \sum_{k \in {\cal T}_j} p_{kj}, \quad ~~~ \forall ~ 1 \leq j \leq J, \label{eq:FQNserver-allocation} \\
 & & p_{kj} = 0, \quad \quad \quad \quad \text{if $k \not \in \mathcal{T}_j$}, \label{eq:FQNLP3} \\
 & & p_{kj} \geq 0.
\end{eqnarray}

Let the optimal value of the LP be $\rho^*$.
Similar to Proposition \ref{prop:rho} one can show that $\rho^* \leq 1$
is a necessary and sufficient condition of system stability.
Thus, given $\mu_{kj}$ and $R$, the \emph{capacity region} $\Lambda$
of the network is the set of all $\lambda \in \mathbb{R}_+^K$,
so that the corresponding optimal solution $\rho^*$ to the LP satisfies $\rho^* \leq 1$.
%More formally,
%\begin{align*}
%\Lambda \triangleq \Bigg\{ \lambda \in \mathbb{R}_+^K : \exists ~p_{kj} \geq 0 \text{ such that }
%\sum_{k=1}^K p_{kj} \leq 1 ~~\forall ~j, \text{ and } \nu_k \leq \sum_{j=1}^J \mu_{kj} p_{kj} ~~\forall ~k \Bigg\}.
%\end{align*}

\subsection{Robust Scheduling Policy}\label{sec:FQNpolicy}

In this section, we propose a robust scheduling policy that is provably throughput-optimal when the service rates can be written as $\mu_{kj}=\mu_k \alpha_j$. The policy is robust to arrival and task service rates, but not robust to routing probabilities of the network and servers' speed. The key idea is to use a stochastic gradient projection algorithm to
update the service allocation vector $p$ such that all the flows in the network are balanced. We first give the precise description of the algorithm, and state the main theorem. Then, we provide some explanations. We use similar notation as the one used in Section \ref{sec:DAG}.

Since service rates can be factorized to a task-dependent rate and a server-dependent rate,
only the sum $p_k \triangleq  \sum_{j} \alpha_j p_{kj}$ affects
the effective service rate for queue $k$. So, similar to the DAG scheduling problem, we call $p = [p_k] \in \mathbb{R}^K$ the service allocation vector. 
Our scheduling algorithm updates the allocation vector $p^n$ in each time slot $n$
in the following manner.
\begin{itemize}
\item[1.] We initialize with an arbitrary feasible $p^0$.
\item[2.] Update the allocation vector $p^n$ as follows.
\begin{align}\label{eq:FQNupdate}
p^{n+1} = [p^n + \beta^n \tE^n(I-R^T)^{-1}\Delta Q^n]_{\mathcal{C}_{\veps_0}},
\end{align}
\end{itemize}
where $\tE^n$ is a $K \times K$ diagonal matrix such that $\tE_{kk} = \bOne_{ \{Q^n_k > 0 \} }$ and ${\cal C}_{\veps_0}$ is given in \eqref{eq:convex-set2}.

The main results of this section are the following two theorems.
\begin{thm}\label{thm3}
Let $\lambda\in \Lambda$.
The allocation vector $p^n$ updated by \eqref{eq:FQNupdate} converges to $p^* = [p^*_k]$ almost surely, where $p^*_k = \frac{\nu_k}{\mu_k}$.
\end{thm}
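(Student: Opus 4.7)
The plan is to follow the same three-step strategy as in the proof of Theorem \ref{thm1}, suitably adapted to the routing structure. The key first step is to verify that $(I-R^T)^{-1}\Delta Q^n$ plays the role of the unbiased drift estimator used in the DAG case. Taking conditional expectations in the queue dynamics \eqref{eq:FQNdynamic}, on the event $E^n = \{Q^n_k > 0,\ \forall k\}$, one has
\begin{align*}
\mathbb{E}\!\left[\Delta Q^n \mid \mathcal{F}^n, E^n\right] = \lambda + R^T\mu p^n - \mu p^n = \lambda - (I - R^T)\mu p^n,
\end{align*}
where $\mu p^n := [\mu_k p^n_k]_k$ denotes the vector of effective service rates. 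Since the network is open, $(I-R^T)$ is invertible with non-negative inverse; multiplying by $(I-R^T)^{-1}$ and using $\nu = (I-R^T)^{-1}\lambda$ yields
\begin{align*}
\mathbb{E}\!\left[(I-R^T)^{-1}\Delta Q^n \mid \mathcal{F}^n, E^n\right] = \nu - \mu p^n.
\end{align*}
Hence \eqref{eq:FQNupdate} is precisely a projected stochastic gradient iteration for the same separable quadratic objective $V(p) = \frac{1}{2}\sum_k (\nu_k - \mu_k p_k)^2$ used in the DAG setting.

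Next, I would decompose the update as
\begin{align*}
p^{n+1} = \bigl[p^n + \beta^n \tE^n(\nu - \mu p^n) + \beta^n \tE^n M^{n+1}\bigr]_{\mathcal{C}_{\veps_0}},
\end{align*}
where $M^{n+1} := (I-R^T)^{-1}\Delta Q^n - \mathbb{E}[(I-R^T)^{-1}\Delta Q^n \mid \mathcal{F}^n]$ is a martingale difference. Because arrivals, departures and routing indicators are all $\{0,1\}$-valued in each slot, the entries of $M^{n+1}$ are uniformly bounded by a constant depending only on the entries of $(I-R^T)^{-1}$; combined with $\sum_n (\beta^n)^2 < \infty$, the series $\sum_n \beta^n \tE^n M^{n+1}$ is therefore an $L^2$-bounded martingale, which converges a.s.\ with vanishing tails by the martingale convergence theorem. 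Consequently the iteration eventually behaves like the noiseless, skewed projected-gradient recursion for $V$, multiplicatively masked by $\tE^n$. Since $V$ is separable and strictly convex and $p^* = [\nu_k/\mu_k]_k$ belongs to $\mathcal{C}_{\veps_0}$ (by choosing $\veps_0 \le \min_k \nu_k/\mu_k$, which is possible because $\lambda \in \Lambda$), a Lyapunov argument on $V(p^n) - V(p^*)$ combined with Kronecker's lemma, exactly as in the proof of Theorem \ref{thm1}, gives $p^n \to p^*$ almost surely, \emph{provided} that $\liminf_N \frac{1}{N}\sum_{n=1}^N \bOne_{E^n} > 0$ a.s.

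I expect this positive-fraction-of-time claim to be the main obstacle. The projection onto $\mathcal{C}_{\veps_0}$ guarantees $p^n_k \ge \veps_0$ for every $k$ and $n$, so the effective service rate $\mu_k p^n_k$ stays uniformly bounded away from both $0$ and $1$ (the latter by the scaling convention on service rates). Together with the standing assumption $\nu_k > 0$ for all $k$, one can dominate each $Q^n_k$ below by a stable $M/M/1$-type comparison queue that is non-empty with a strictly positive long-run frequency, uniformly in $n$. The random routing introduces no additional difficulty here, since openness of the network guarantees that every queue either receives positive exogenous arrivals or inherits strictly positive average traffic from upstream queues. Combining this with the first two steps closes the proof.
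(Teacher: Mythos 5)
Your overall plan correctly mirrors the paper's three-step strategy — the paper does exactly this, and in fact simply states that the proof of Theorem~\ref{thm3} is ``almost identical'' to that of Theorem~\ref{thm1} and omits the details. You have correctly identified the key calculation: conditioning in the routing dynamics and multiplying through by $(I-R^T)^{-1}$ shows that $\tE^n(I-R^T)^{-1}\Delta Q^{n+1}$ is an unbiased estimate of $\tE^n(\nu - Mp^n)$, which is precisely what the paper derives in the display culminating in \eqref{eq:FQNunbiased}. The martingale step is also handled exactly as in the paper.

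Where your proposal departs from the paper — and where it has a genuine gap — is the positive-fraction-of-time step. The paper's argument (Lemmas~\ref{LEM:PFRAC}, \ref{lem:coupling}, \ref{lem:kronecker}) establishes that the \emph{joint} event $E^n=\{Q^n_k>0 \ \forall k\}$ occurs a positive fraction of the time, and it does so by an explicit, history-uniform event construction: given any $\mathcal{F}^n$, since $\mu_k p^n_k \in [\mu_k\veps_0, 1)$ is bounded away from $0$ and $1$ and routing probabilities and arrival rates are fixed, one can force arrivals and a prescribed pattern of services/non-services over $\ell$ slots that leaves every queue occupied, with probability $\delta_0>0$ independent of $\mathcal{F}^n$. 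Your proposed route via ``dominating each $Q^n_k$ below by a stable $M/M/1$ comparison queue'' has several problems. First, it is aimed at \emph{marginal} occupancy (each queue individually non-empty a positive fraction of the time), whereas the Lyapunov drift argument in the proof of Theorem~\ref{thm1}, carried over to Theorem~\ref{thm3}, requires the \emph{joint} indicator $\bOne_{E^n}$ (see the expression for $\gamma^n$, where the negative drift is proportional to $\beta^n\bOne_{E^n}$). Going from marginal to joint occupancy is exactly the correlation issue that the paper's construction sidesteps. Second, for a queue $k$ with $\lambda_k=0$ (which is allowed — only $\nu_k>0$ is assumed), a comparison queue fed by exogenous arrivals alone is identically zero, and a comparison fed by endogenous arrivals inherits the occupancy of upstream queues, leading to circularity. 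Third, such a comparison would likely need to use that $p^n$ is close to $p^*$ in order to control the endogenous arrival stream to downstream queues, which is precisely what you are trying to prove; the paper's construction avoids this by being uniform over all feasible $p^n\in\mathcal{C}_{\veps_0}$. To close the gap, replace the $M/M/1$ comparison with an adaptation of Lemma~\ref{LEM:PFRAC}: since $\nu_k>0$ for each $k$, there is a directed path in the routing graph from some queue $i$ with $\lambda_i>0$ to $k$ along edges with $r>0$, and one can construct a finite-length event of arrivals, routings, services and non-services that populates all queues simultaneously with a conditional probability bounded below uniformly in the history; the remainder of the argument then carries over verbatim.
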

The proof of Theorem \ref{thm3} is almost identical to the proof of Theorem \ref{thm1}. We avoid repeating the details. 
\begin{thm}\label{thm4}
The flexible queueing network is rate stable under the robust scheduling algorithm specified by the update in \eqref{eq:FQNupdate}, i.e.
$$
\lim_{n \to \infty} \frac{Q^n_k}{n} = 0, ~\forall k.
$$
\end{thm}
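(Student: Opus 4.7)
The plan is to combine Theorem \ref{thm3} (which yields $p^n_k \to p^*_k = \nu_k/\mu_k$ almost surely) with a fluid-scaling argument. First, by the SLLN for Bernoulli sequences, $A_k(n)/n \to \lambda_k$ a.s. Next, by the strong law for bounded-increment martingales applied to the innovations $d^s_k - \mu_k p^s_k \bOne_{\{Q^s_k > 0\}}$ and $d^s_{k'}(\bOne^s_{k'\to k} - r_{k'k})$,
\begin{align*}
D_k(n) - \sum_{s<n} \mu_k p^s_k \bOne_{\{Q^s_k > 0\}} = o(n) \quad\text{and}\quad \sum_{s<n} d^s_{k'}\bOne^s_{k'\to k} - r_{k'k} D_{k'}(n) = o(n)
\end{align*}
hold almost surely. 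Combining with Cesaro averaging of Theorem \ref{thm3} gives $\frac{1}{n}\sum_{s<n} \mu_k p^s_k \to \nu_k$ a.s.

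Second, I would pass to fluid scaling via $\bar Q^{(n)}(t) = Q^{\lfloor nt\rfloor}/n$, $\bar D^{(n)}(t) = D(\lfloor nt\rfloor)/n$, and $\bar T^{(n)}_k(t) = \frac{1}{n}\sum_{s < \lfloor nt\rfloor}\bOne_{\{Q^s_k > 0\}}$. Since all per-step increments are uniformly bounded (exogenous arrivals are at most $1$, and at most $K-1$ endogenous arrivals can enter queue $k$ per slot), the scaled paths are uniformly Lipschitz in $t$, so Arzela-Ascoli and a diagonal argument yield, along any subsequence, a further subsequence along which the scaled paths converge a.s. uniformly on compacts to absolutely continuous limits $(\bar Q, \bar D, \bar T)$. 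Using the asymptotic estimates above, any such limit satisfies $\bar Q_k(0) = 0$ and
\begin{align*}
\bar Q_k(t) = \lambda_k t + \sum_{k'} r_{k'k}\bar D_{k'}(t) - \bar D_k(t), \qquad \bar D_k'(t) = \nu_k\, \dot{\bar T}_k(t) \ \text{a.e.}, \quad \dot{\bar T}_k(t)\in[0,1].
\end{align*}
Crucially, the complementarity $\bar Q_k(t) > 0 \Rightarrow \dot{\bar T}_k(t) = 1$ holds, because if $\bar Q_k(t) > 0$, then by continuity $\bar Q_k \geq \bar Q_k(t)/2$ on some neighborhood $I$ of $t$; uniform convergence then forces $Q^s_k \geq n\bar Q_k(t)/4 > 0$ for every integer $s \in n I$ when $n$ is large, and $\bar T_k^{(n)}$ grows at rate one on $I$ in the limit.

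The heart of the argument is to show $\bar Q \equiv 0$ is the unique fluid solution. Suppose for contradiction that $\bar Q_k(t^\ast) > 0$ for some $k$ and $t^\ast > 0$. Continuity and $\bar Q_k(0) = 0$ yield $t_0 \in [0, t^\ast)$ with $\bar Q_k(t_0) = 0$ and $\bar Q_k > 0$ on $(t_0, t^\ast]$. On this interval, complementarity gives $\bar D_k'(s) = \nu_k$, while $\bar D_{k'}'(s) \leq \nu_{k'}$ for every $k'$; invoking the traffic equation $\nu_k = \lambda_k + \sum_{k'} r_{k'k}\nu_{k'}$,
\begin{align*}
\bar Q_k'(s) = \lambda_k + \sum_{k'} r_{k'k}\bar D_{k'}'(s) - \nu_k \leq \lambda_k + \sum_{k'} r_{k'k}\nu_{k'} - \nu_k = 0 \quad \text{a.e. on } (t_0, t^\ast].
\end{align*}
Integrating yields $\bar Q_k(t^\ast) \leq \bar Q_k(t_0) = 0$, a contradiction. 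Hence every subsequential fluid limit vanishes, which by a standard compactness argument gives $Q^n/n = \bar Q^{(n)}(1) \to 0$ a.s.

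The main obstacle is the careful verification of the complementarity condition in the limit, which couples the a.s. convergence of $p^n$ delivered by Theorem \ref{thm3} with the trajectory-level behavior of the unscaled queues; this requires the bounded-increment martingale SLLN for $D_k$ together with uniform control of the Lipschitz constants across all scaled processes.
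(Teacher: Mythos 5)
Your argument is correct, and it takes a genuinely different route from the paper. You establish rate stability via the fluid-limit machinery: bounded-increment martingale SLLNs for the departure and routing innovations, Cesaro averaging of Theorem~\ref{thm3} to get $\tfrac{1}{n}\sum_{s<n}\mu_k p^s_k \to \nu_k$, tightness of the scaled paths via uniform Lipschitz bounds and Arzel\`a--Ascoli, a complementarity condition ($\bar Q_k(t)>0 \Rightarrow \dot{\bar T}_k(t)=1$) obtained from uniform convergence, and then a clean contradiction by differentiating the fluid queue-length equation and invoking the traffic equation $\nu_k = \lambda_k + \sum_{k'} r_{k'k}\nu_{k'}$. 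The paper instead runs a direct pathwise argument with no fluid scaling: it proves Lemma~\ref{lem:FQN} via explicit couplings with i.i.d.\ Bernoulli processes, bounds $Q^n_k$ above by replacing departures with virtual services, shows $\liminf Q^n_k/n = 0$ by a contradiction that compares $\limsup D^n_k/n$ with $\nu_k$ when the queue would be eventually nonempty, and shows $\limsup Q^n_k/n = 0$ by introducing $\tilde B^n_k = A^n_k + \sum_{n',k'} s^{n'}_{k'}\bOne^{n'}_{k'\to k} - S^n_k$ and tracking how $\tilde B^n_k/n$ moves between two thresholds of $Q^n_k/n$, which forces a contradiction with $\tilde B^n_k/n\to 0$. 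Your route is more modular and shorter once the fluid-limit infrastructure is in place, and the complementarity plus traffic-equation step is transparent; the paper's route avoids invoking tightness and subsequential limits altogether and is more elementary, at the cost of a more intricate $\limsup$ ``crossing'' argument. Both hinge on the same two inputs -- Theorem~\ref{thm3} giving $\mu_k p^n_k\to\nu_k$, and the almost-sure rate $\nu_k r_{kk'}$ of routed virtual services -- the difference is purely in how these rates are converted into a rate-stability conclusion. One place to be careful in a full write-up: you need the early time steps (where $p^s_k$ has not yet converged) to wash out in the fluid limit, which they do since they contribute only $O(1/n)$ to $\bar D_k^{(n)}(t)$; this should be stated explicitly when passing from $\tfrac{1}{n}\sum_{s<\lfloor nt\rfloor}\mu_k p^s_k \bOne_{\{Q^s_k>0\}}$ to $\nu_k\bar T_k(t)$.
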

The proof of Theorem \ref{thm4} is provided in Appendix \ref{apdx:proof-thm4}.

The intuition for the update \eqref{eq:FQNupdate} is as follows. The algorithm tries to adaptively find the allocation vector $p^*$ using a gradient projection method that solves \eqref{eq:optimization}. To robustify the algorithm to the knowledge of task service rates, we consider the ``skewed'' updates in \eqref{eq:skewed-update}. However, the major difference compared to the DAG scheduling problem is the way we find unbiased estimators of the terms $\nu_k - \mu_k p_k^n$. 
We use $\Delta Q^{n+1}$, the changes in queue sizes, and routing matrix $R$,
to estimate these terms.
It is easy to show that the $k^{\text{th}}$ entry of $(I-R^T)^{-1}\Delta Q^{n+1}$ is an unbiased estimator $\nu_k - \mu_k p^n_k$, if $Q^n_k>0$. Define $M = \text{diag}\{\mu_k\}$, 
\yz{the $K \times K$ diagonal matrix with diagonal entries $\mu_k$}. Then, 
\begin{align}
\mathbb E(\tE^n&(I-R^T)^{-1}\Delta Q^{n+1}|Q^n) \\
& = \tE^n (I-R^T)^{-1}\mathbb E(\Delta Q^{n+1}|Q^n) \\
& = \tE^n (I-R^T)^{-1}(\lambda +R^TM \tE^n p^n - M \tE^n p^n) \\
& = \tE^n \nu - M \tE^n p^n \\ \label{eq:FQNunbiased}
& = \tE^n(\nu - M p^n).
\end{align}
Note that matrix $\tE^n$ in update \eqref{eq:FQNupdate} ensures that the algorithm updates $p^n_k$ only for queues $k$ that are non-empty, since $\left[(I-R^T)^{-1}\Delta Q^{n+1}\right]_k$
is no longer an unbiased estimator of $\nu_k- \mu_k p^n_k$ when $Q^n_k = 0$.

\section{Conclusion and Future Work}\label{sec:conclusion}
In this paper, we presented two processing networks that can well model different applications such as cloud computing, manufacturing lines, and healthcare systems. Our processing \yuan{system is} flexible in the sense that \yuan{servers} are capable of processing \yz{different types of} tasks, while tasks can also be served by 
\yz{different} servers. We proposed a scheduling and capacity allocation policy for these networks that is robust to service rates of the tasks and the arrival rates.  The proposed scheduling algorithm is based on solving an optimization problem by stochastic gradient projection. The algorithm solves the problem of balancing all the flows in the network using only queue size information, and uses the allocation vector derived by the gradient algorithm at each time slot as its scheduling decision. We proved rate stability of the queueing networks corresponding to the models in the case that servers are cooperative and service rates can be factorized to a task-dependent rate and server-dependent rate. 

There are many possible directions for future research. We summarize some of these directions as follows. 
\begin{itemize}
\item It is important to find provably throughput optimal and robust scheduling  policies while relaxing the assumption of cooperative servers. \yuan{Some progress has been made in} \cite{PWZAllerton}.
\item A future direction is to investigate whether there exists a throughput-optimal policy which is only dependent on the queue-size information in the network in the current state or in the past, when service rates are generic, and cannot be necessary factorized to a task-dependent rate and server-dependent rate.
\item In the case of flexible queueing networks, a future direction is to find a throughput-optimal policy that is robust to the knowledge of routing probabilities. 
\end{itemize}

\bibliographystyle{apt}
%\bibliography{journal_abbr,ref-YZ}
\bibliography{ref-YZ}

\newpage 

\section{Appendix}
%Appendix A

\subsection{Proof of Proposition \ref{prop:rho}}\label{apdx:prop}
Consider the fluid scaling of the queueing network, $X^{rt}_{(k',k)} = \frac{Q_{(k',k)}(\lfloor rt \rfloor)}{r}$ (see \cite{Dai95} for more discussion on the stability of fluid models), 
\yz{and let $X^t_{(k', k)}$ be the corresponding fluid limit.} 
%The fluid level of queue $(k',k)$ at time $t$ is 
%$$
%X_{(k',k)}(t) = \lim_{r \to \infty} \frac{Q_{(k',k)}(\lfloor rt \rfloor)}{r}.
%$$
The fluid model dynamics is as follows. If $k$ is a root node, then
$$
X^t_{(0,k)} = X^0_{(0,k)} + A^t_{m(k)} - D^t_k,
$$
where $A^t_{m(k)}$ is the total number of jobs of type $m$ (scaled to the fluid level) that have arrived to the system until time $t$. If $k$ is not a root node, then,
$$
X^t_{(k',k)} = X^0_{(k',k)} + D^t_{k'} - D^t_k,
$$
where $D^t_{k}$ is the total number of tasks (scaled to the fluid level) of type $k$ processed up to time $t$. Suppose that $\rho^* > 1$. We show that if $X^0_{(k',k)} = 0$ for all $(k',k)$, there exists $t_0$ and $(k',k)$ such that $X^{t_0}_{(k',k)} \geq \eps(t_0)>0$, which implies that the system is weakly unstable \cite{dai99}. In contrary suppose that there exists a scheduling policy that under that policy for all $t \geq 0$ and all $(k',k)$, $X^t_{(k',k)} = 0$. Pick a regular point\footnote{We define a point $t$ to be regular if $X^t_{(k',k)}$ is differentiable at $t$ for all $(k',k)$.} $t_1$. Then, for all $(k',k)$, $\dot{X}^{t_1}_{(k',k)} = 0$. Since $\dot{A}^{t_1}_{m(k)} = \lambda_m = \nu_k$, this implies that $\dot{D}^{t_1}_{k} = \nu_k$ for all the root nodes $k$. Now considering queues $(k',k)$ such that nodes $k'$ are roots, one gets 
$$\dot{D}^{t_1}_{k} = \dot{D}^{t_1}_{k'} = \nu_{k'} = \nu_k.$$
Similarly, one can inductively show that for all $k$, $\dot{D}^{t_1}_k = \nu_k$. On the other hand, at a regular point $t_1$, $\dot{D}^{t_1}_k$ is exactly the total service capacity allocated to task $k$ at $t_1$. This implies that there exists $p_{kj}$ at time $t_1$ such that $\nu_k = \sum_{j \in {\cal S}_k} \mu_{kj} p_{kj}$ for all $k$ and the allocation vector $[p_{kj}]$ is feasible, i.e. $\sum_{k \in {\cal T}_j} p_{kj} \leq 1$. This contradicts $\rho^* > 1$.

Now suppose that $\rho^* \leq 1$, and $p^* =[p^*_{kj}]$ is an allocation vector that solves the LP. To prove sufficiency of the condition, consider a generalized head-of-the-line processor sharing policy that server $j$ works on task $k$ with capacity $p^*_{kj}$. Then the cumulative service allocated to task $k$ up to time $t$ is $S^t_k = \sum_{j \in {\cal S}_k} \mu_{kj} p^*_{kj} t \geq \nu_k t$. We show that $X^t_{(k',k)} = 0$ for all $t$ and all $(k',k)$, if $X^0_{(k',k)} = 0$ for all $(k',k)$. First consider queue $(0,k)$ corresponding to a root node. Suppose that $X^{t_1}_{(0,k)} \geq \epsilon > 0$ for some positive $t_1$ and $\epsilon$. %Then, by
By continuity of \yz{the fluid limit}, there exists \yz{$t_0 \in (0, t_1)$} such that $X^{t_0}_{(0,k)} = \epsilon/2$ and $X^t_{(0,k)} > 0$ for all $t \in [t_0,t_1]$. Then, $\dot{X}^{t}_{(0,k)} = \nu_k - \sum_{j \in {\cal S}_k} \mu_{kj} p^*_{kj} \leq 0$ for $t \in [t_0,t_1]$, which is a contradiction. Now we show that $X^t_{(k',k)} = 0$ for all $t$ if $k'$ is a root node and $k$ is a child of $k'$. Note that $X^t_{(0,k')} = 0$; thus, $\dot{D}^t_{k'} = \nu_{k'} = \nu_k$. Then, $\dot{X}_{(k',k)} = \nu_{k} - \sum_{j \in {\cal S}_{k}} \mu_{kj} p^*_{kj} \leq 0$. This proves that $X^t_{(k',k)} = 0$ for all $t$. One can then inductively complete this proof for all queues $(k',k)$. 

\subsection{Proof of Theorem \ref{thm1}}\label{apdx:proof-thm1}
Recall the following notation which will be widely used in the proofs.
$$
\bOne_{\{Q^n_{(k',k)} > 0, ~\forall k' \in {\cal P}_k\}} = \bOne_{E^n_k}.
$$
We introduce another notation which is
$$
\bOne_{E^n} = \prod_{k=1}^K \bOne_{E^n_k}.
$$
Note that event $E^n$ denotes the event that all the queues are non-empty at time $n$.

\begin{lem}\label{LEM:PFRAC}
\yz{There exist constants $\ell$ and $\delta_0 > 0$, which are independent of $n$, 
such that given any history ${\cal F}^n$ up to time $n$,} 
%Given any history ${\cal F}^n$ \yuan{up to} time $n$, there exist \ramtin{constants $\ell$ and $\delta_0 > 0$ independent of $n$} such that 
$\mathbb P(\bOne_{E^{n+\ell}}=1 |{\cal F}^n)\geq \delta_0 > 0$.
\end{lem}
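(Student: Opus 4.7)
The plan is to exhibit an explicit ``good'' event $A$ over $\ell := L^*+1$ consecutive time slots (where $L^* := \max_k L_k$) such that on $A$ we have $\bOne_{E^{n+\ell}}=1$, and such that $\PP(A \mid \mathcal{F}^n) \geq \delta_0 > 0$ for a constant $\delta_0$ independent of $n$ and $\mathcal{F}^n$.

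Step 1 is to get uniform bounds on single-slot Bernoulli probabilities. Because $\mathcal{C}_{\veps_0}$ is a compact polytope with $p_k \geq \veps_0$, each coordinate $p^n_k$ lies in some $[\veps_0, \bar P]$ with $\bar P$ depending only on system parameters. Together with Assumption~\ref{asmp:factor} and the scaling $\sum_{j \in \mathcal{S}_k} \mu_{kj} < 1$, this gives $\mu_k p^n_k = \sum_j \mu_{kj} p^n_{kj} \in [\mu_k \veps_0,\,\sum_j \mu_{kj}] \subset (0,1)$ uniformly in $n$. Thus every arrival Bernoulli (rate $\lambda_m$) and every available-task service Bernoulli (rate $\mu_k p^n_k$ on $\{\bOne_{E^n_k}=1\}$) has success probability bounded strictly inside $(0,1)$ by constants that depend only on system parameters. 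So any specified joint outcome of the (at most $M+K$) Bernoullis at a single slot occurs with probability at least some $\delta_1>0$ uniform in $n$ and $\mathcal{F}^n$.

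Step 2 is the construction. Define $A$ to be the event that at each time $n+t$, $t = 0,1,\ldots,\ell-1$: (i) every job type has an arrival, (ii) every task $k$ with $L_k<t$ and $\bOne_{E^{n+t}_k}=1$ is served, and (iii) no other service occurs. I would then verify by induction on $L$ the following invariant: for every $k$ with $L_k=L$ and every $k' \in \mathcal{P}_k$, $Q^{n+t}_{(k',k)} \geq 1$ for all $t \geq L+1$ (with the analogous statement $Q^{n+t}_{(0,k)} \geq 1$ for $t \geq 1$ when $k$ is a root, which is immediate from the constant arrival stream and $d^{n+s}_k \leq 1$). In the inductive step, at event time $n+L$ the parent $k'$ has depth $\leq L-1$, so by the inductive hypothesis its input queues are already non-empty, giving $\bOne_{E^{n+L}_{k'}}=1$; hence $k'$ is served under $A$, while $k$ (with $L_k=L \not< L$) is not, so $Q^{n+L+1}_{(k',k)}$ gains exactly $1$. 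For $t>L+1$, both $k'$ and $k$ are served, yielding net change $0$, so the queue stays $\geq 1$. Taking $t=\ell=L^*+1$ then yields $\bOne_{E^{n+\ell}}=1$.

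Finally, since $A$ pins down at most $M+K$ Bernoullis per slot over $\ell$ slots, Step 1 gives $\PP(A \mid \mathcal{F}^n) \geq \delta_1^{\ell(M+K)} =: \delta_0 > 0$, as required. The main technical obstacle will be the inductive bookkeeping in Step 2: one must both (a) verify that $\bOne_{E^{n+t}_{k'}}=1$ precisely when $A$ requires $k'$ to be served, and (b) see that services at parents exactly offset services at children from time $L+1$ onward, so that no queue slips back to zero. The depth-ordered schedule ``$L_k<t$'' is precisely what enforces the parents-before-children processing order needed for (a), and the symmetric parent--child service pattern under $A$ gives (b).
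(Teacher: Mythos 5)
Your proof is correct and takes essentially the same approach as the paper: a depth-ordered explicit event over $\ell = \max_k L_k + 1$ slots (arrivals every slot, serve exactly the tasks with $L_k < t$ at slot $n+t$), with positive probability coming from $\mu_k p^n_k \in [\mu_k \veps_0,\, \sum_j \mu_{kj}] \subset (0,1)$ uniformly in $n$. The only cosmetic differences are that you use a single uniform schedule over all DAGs (whereas the paper runs a per-DAG schedule of length $\ell_m$ and then ``freezes'' each DAG until time $n+\ell$) and that you make the availability condition $\bOne_{E^{n+t}_k}=1$ explicit in the definition of the event, which is a slight tightening of the same construction.
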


\begin{proof} 
We work with each of the DAGs ${\cal G}_m$ 
separately, and construct events so that all the queues corresponding to ${\cal G}_m$ 
have positive lengths after some time $\ell$. 
We can do this since $\mu_k p^n_k$ will always be no smaller than $\mu_k \veps_0$ and strictly smaller than $1$, so there is positive probability of serving or not serving a task.  

Let $\wE^n_k$ be the event that task $k$ is served at time $n$, $\bE^n_k$ be the event that task $k$ is not served at time $n$, and $\hat{E}^n_m$ be the event that job type $m$ arrives at time $n$. Consider a particular DAG ${\cal G}_m$. Recall that $L_k$ is the length of the longest path from the root nodes of the DAG to node $k$. Let $\ell_m = \max_{k \in {\cal V}_m}  L_k + 1$. We construct the event $E(\ell_m)$ that happens with a strictly positive probability, and assures that all the queues at time $n+ \ell_m$ are non-empty. Toward this end, let $E(\ell_m) = \cap_{n'=0}^{\ell_m - 1} C^{n'}$, where event $C^{n'}$ is
$$
C^{n'} = \hat{E}^{n'}_m \cap_{\{k: L_k \leq n'-1\}}\wE^{n'}_k \cap_{\{k: L_k > n'-1\}} \bE^{n'}_k.
$$
In words, $C^{n'}$ is the event that at time $n'$, there is a job arrival of type $m$, 
services of tasks of class $k$ for $k$ with $L_k \leq n'-1$, and no service 
of tasks of class $k$ for $k$ with $L_k > n'-1$.
Now, by construction all the queues are non-empty at time $n+\ell$ with a positive probability. To illustrate how we construct this event, consider the example of Figure \ref{fig:DAG} and the corresponding queueing network in Figure \ref{fig:QN}. Then, $C^{0}$ is the event that there is an arrival to the system, and no service in the network. $C^{1}$ is the event that there is a new job arriving, task $1$ is served, and tasks $2$, $3$, and $4$ are not served. Note that there is certainly at least one available task $1$ to serve due to $C^0$. Up to now, certainly queues $(0,1)$, $(1,2)$, and $(1,3)$ are non-empty. $C^2$ is the event of having a new arrival, service to tasks $1$, $2$, and $3$, and no service to task $4$. This construction assures that after $3$ time slots, all the queues are non-empty.

Now for the whole network it is sufficient to take $\ell = \max_m \ell_m$. Construct the events $E(\ell_m)$ for each DAG independently, and freeze the DAG ${\cal G}_m$ (no service and no arrivals) from time $n + \ell_m$ to $n + \ell - 1$. This construction makes sure that all the queues in the network are non-empty at time $n+\ell$ given any history ${\cal F}^n$ with some positive probability $\delta_0$. 
\end{proof}

\begin{lem}\label{lem:coupling}
The following inequality holds.
$$
\liminf_{n \to \infty} \frac{1}{n} \sum_{n'=1}^n \bOne_{E^{n'}} \geq \frac{\delta_0}{\ell} > 0, a.s.
$$
\end{lem}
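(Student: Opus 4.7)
The plan is to exploit Lemma \ref{LEM:PFRAC} by coarse-graining time into blocks of length $\ell$ and then applying a strong law of large numbers to a dominating sum of indicators.

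First, I would consider the subsequence $\{k\ell : k \geq 1\}$ and set $Y_k = \bOne_{E^{k\ell}}$.
Lemma \ref{LEM:PFRAC} applied with history $\mathcal{F}^{(k-1)\ell}$ gives
$$\mathbb{E}\bigl[Y_k \,\big|\, \mathcal{F}^{(k-1)\ell}\bigr] = \mathbb{P}\bigl(\bOne_{E^{k\ell}}=1 \,\big|\, \mathcal{F}^{(k-1)\ell}\bigr) \geq \delta_0$$
for every $k \geq 1$. So although the $Y_k$ are not i.i.d., each has a conditional mean uniformly bounded below by $\delta_0$, which is exactly the setup for a conditional SLLN.

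Next, I would introduce the martingale difference sequence
$\xi_k := Y_k - \mathbb{E}[Y_k \mid \mathcal{F}^{(k-1)\ell}]$.
Since $|\xi_k| \leq 1$, the partial sums $M_N = \sum_{k=1}^N \xi_k$ form a martingale with uniformly bounded increments, and so $\sum_k \mathbb{E}[\xi_k^2 \mid \mathcal{F}^{(k-1)\ell}]/k^2 < \infty$ almost surely. By the martingale strong law of large numbers (e.g., Theorem 4.5.2 of \cite{Durrett}), $M_N/N \to 0$ a.s. Combining this with the uniform lower bound on the conditional means yields
$$\liminf_{N\to\infty} \frac{1}{N}\sum_{k=1}^N Y_k \;\geq\; \delta_0 \quad \text{a.s.}$$
(Alternatively, one can couple the $Y_k$'s with i.i.d.\ Bernoulli$(\delta_0)$ variables constructed on an enlarged probability space, and apply the classical SLLN; the martingale route avoids enlarging the space.)

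Finally, I would transfer the bound from the subsequence back to the full sum. Since the events $\{E^{k\ell}\}_{k=1,\ldots,N}$ occur at distinct time points among $\{1,2,\ldots,N\ell\}$, we have the deterministic inequality
$$\sum_{n'=1}^{N\ell} \bOne_{E^{n'}} \;\geq\; \sum_{k=1}^N Y_k,$$
so dividing by $N\ell$ and taking $\liminf$ gives $\liminf_N \tfrac{1}{N\ell}\sum_{n'=1}^{N\ell} \bOne_{E^{n'}} \geq \delta_0/\ell$ a.s. For general $n$, writing $n = N\ell + r$ with $0 \leq r < \ell$, the boundary contribution $r/n$ is $O(\ell/n) \to 0$, so the same lower bound carries over along all of $\mathbb{N}$.

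The main obstacle I anticipate is the first step: justifying the strong law when the $Y_k$ are only conditionally stochastically dominated rather than i.i.d. Once one commits to either the martingale SLLN or an explicit coupling with i.i.d.\ Bernoulli$(\delta_0)$ variables (realised on a product extension of the underlying space), the remaining steps are essentially bookkeeping.
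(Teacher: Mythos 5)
Your proof is correct and follows essentially the same route as the paper: sample the process at multiples of $\ell$, center at the conditional mean to form a bounded martingale difference sequence, apply a martingale strong law of large numbers together with the uniform lower bound $\delta_0$ from Lemma~\ref{LEM:PFRAC}, and then convert the bound on the $\ell$-spaced subsequence to the full time average. The paper's proof is effectively identical (even the coupling alternative you mention in parentheses appears there as a commented-out variant), with only cosmetic differences such as the constant bounding the increments.
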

\begin{proof}
Take a subsequence $\bOne_{E^{n'\ell}}, ~n' \geq 1$. 
\yz{Define a sequence $\left(Y^{n'}\right)_{n'\geq 1}$ by
\[
Y^{n'} = \bOne_{E^{n'\ell}} - \PP(\bOne_{E^{n'\ell}}=1 \mid {\cal F}^{(n'-1)\ell}).
\]
Then, it is easy to see that $\EE\left[Y^{n'}\mid {\cal F}^{(n'-1)\ell}\right] = 0$. 
Thus, $\left(Y^{n'}\right)$ is an ${\cal F}^{n'\ell}$-adapted zero-mean martingale. 
Furthermore, we have $|Y^{n'}|\leq 2$ a.s. for each $n'$. By applying the martingale law of large numbers 
(see e.g., Corollary 2 in Section 11.2 of \cite{Chow1997}), we have 
$\lim_{m \rightarrow \infty}\frac{1}{m} \sum_{n'=1}^m Y^{n'} = 0$ a.s. 
By Lemma \ref{LEM:PFRAC}, this immediately implies that
$\liminf_{m \rightarrow \infty} \frac{1}{m} \sum_{n'=1}^m \bOne_{E^{n'\ell}} \geq \delta_0$ a.s. 
Therefore, with probability 1, 
\[
 \liminf_{n \to \infty} \frac{1}{n} \sum_{n'=1}^n \bOne_{E^{n'}} \geq \liminf_{n \to \infty} \frac{1}{n} \sum_{n'=1}^{\lfloor n/\ell \rfloor} \bOne_{E^{n' \ell} } 
 \geq \frac{\delta_0}{\ell}.
\]
This completes the proof of Lemma \ref{lem:coupling}.}
\iffalse
We couple the subsequence with an i.i.d. Bernoulli process $Y^{n'}, ~n'\geq 1$ with parameter $\delta_0$ as follows. If $\bOne_{E^{n' \ell}} = 0$, then $Y^{n'} = 0$. If $\bOne_{E^{n' \ell}} = 1$, then $Y^{n'} = 1$ with probability 
$$
q_{n'} = \frac{\delta_0}{\PP(\bOne_{E^{n'\ell}}=1 |{\cal F}^{(n'-1)\ell})},
$$
and $Y^{n'} = 0$ with probability $1 - q_{n'}$. Note that by Lemma \ref{LEM:PFRAC}, $q_{n'} \leq 1$. $Y^{n'}$ is still marginally i.i.d. Bernoulli process with parameter $\delta_0$.  Then, with probability $1$,
\begin{align*}
 \liminf_{n \to \infty} \frac{1}{n} \sum_{n'=1}^n \bOne_{E^{n'}} & \geq \liminf_{n \to \infty} \frac{1}{n} \sum_{n'=1}^{\lfloor n/\ell \rfloor} \bOne_{E^{n' \ell} } \\
 & \geq  \liminf_{n \to \infty} \frac{1}{n} \sum_{n'=1}^{\lfloor n/\ell \rfloor} Y^{n'} = \frac{\delta_0}{\ell}.
\end{align*}
This completes the proof of Lemma \ref{lem:coupling}. \fi
\end{proof}

\begin{lem}\label{lem:kronecker}
The following equality holds. 
$$
\lim_{n \to \infty} \sum_{n'=1}^n \beta^{n'} \bOne_{E^{n'}} = \infty, ~ a.s.
$$
\end{lem}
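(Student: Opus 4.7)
The plan is to combine Lemma \ref{lem:coupling} with the sequence properties of $\{\beta^n\}$ -- in particular condition (iii) -- via an Abel summation argument. The key idea is to reduce the weighted sum to a harmonic-weighted sum, where divergence follows easily from the linear growth of $B_n := \sum_{n'=1}^n \bOne_{E^{n'}}$.

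First, I would fix a full-probability event on which Lemma \ref{lem:coupling} holds, so that $B_n \geq \alpha n$ with $\alpha := \delta_0/(2\ell) > 0$ for all $n$ larger than some (random) threshold $N(\omega)$. Next, I would use condition (iii), namely $\lim_{n \to \infty} \frac{1}{n\beta^n} < \infty$, to deduce the existence of a deterministic constant $c > 0$ and an integer $N'$ such that $\beta^n \geq c/n$ for all $n \geq N'$: indeed, a finite limit of $1/(n\beta^n)$ forces $n\beta^n$ to be bounded below by a positive constant. This reduces the claim to showing that $\sum_{n=1}^\infty \frac{1}{n}\bOne_{E^n} = \infty$ almost surely, since on the event above
\[
\sum_{n'=1}^n \beta^{n'} \bOne_{E^{n'}} \;\geq\; c\sum_{n' = N'}^n \frac{1}{n'}\bOne_{E^{n'}}.
\]

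Finally, I would invoke Abel summation and write
\[
\sum_{n'=1}^n \frac{\bOne_{E^{n'}}}{n'} \;=\; \frac{B_n}{n} + \sum_{n'=1}^{n-1} \frac{B_{n'}}{n'(n'+1)}.
\]
Discarding the nonnegative first term and lower-bounding $B_{n'}$ by $\alpha n'$ for $n' \geq N$, one obtains
\[
\sum_{n'=1}^n \frac{\bOne_{E^{n'}}}{n'} \;\geq\; \alpha \sum_{n'=N}^{n-1} \frac{1}{n'+1},
\]
and the right-hand side diverges as $n \to \infty$ by divergence of the harmonic series.

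The argument is entirely deterministic once one has conditioned on the full-probability event supplied by Lemma \ref{lem:coupling}, so no additional probabilistic input is needed. There is no serious obstacle; the only mild subtlety is the reduction step that invokes assumption (iii), but this is a direct consequence of the definition of a finite limit. In particular, note that conditions (i) and (ii) on $\{\beta^n\}$ are not explicitly used here -- all the weight is carried by condition (iii) together with the ergodic-type estimate of Lemma \ref{lem:coupling}.
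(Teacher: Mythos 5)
Your proof is correct, and it takes a genuinely different route from the paper's. The paper argues by contradiction through Kronecker's lemma: assuming $\sum_{n'}\beta^{n'}\bOne_{E^{n'}} < \infty$, it sets $b_{n'} = 1/\beta^{n'}$ (increasing and divergent since $\beta^{n'}$ is decreasing and $\to 0$ by condition (ii)), applies Kronecker's lemma to conclude $\beta^n\sum_{n'\leq n}\bOne_{E^{n'}} \to 0$, and then derives a contradiction with Lemma \ref{lem:coupling} using condition (iii). You instead proceed directly: condition (iii) gives a deterministic lower bound $\beta^n \geq c/n$ for all large $n$, which reduces the claim to divergence of $\sum_{n'}\bOne_{E^{n'}}/n'$, and Abel summation converts the linear lower bound $B_{n'}\geq \alpha n'$ from Lemma \ref{lem:coupling} into divergence via the harmonic series. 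Your route is self-contained and more elementary -- it avoids invoking Kronecker's lemma -- and you correctly observe that it uses only condition (iii) and Lemma \ref{lem:coupling}, whereas the paper's Kronecker argument additionally relies on $\beta^n\to 0$ (supplied by condition (ii), or by the decreasing hypothesis plus convergence). A minor stylistic remark: your argument also handles the degenerate case where $\beta^n$ is bounded away from zero without any special-casing, whereas Kronecker's lemma would not apply there (though that case is trivial anyway since $\sum\bOne_{E^{n'}}$ grows linearly).
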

\begin{proof}
From now on we work with the probability-$1$ event defined in Lemma \ref{lem:coupling}. 
Consider a sample path in this probability-$1$ event, and 
let $x_{n'} = \beta^{n'} \bOne_{ E^{n'} }$. First note that $x_{n'} \geq 0$. Thus, by the monotone convergence theorem, the series either converges or goes to infinity. Suppose that 
$
\lim_{n \to \infty} \sum_{n'=1}^n x_{n'} = s
$
for some finite $s$.
Define the sequence $b_{n'} = \frac{1}{\beta^{n'}}$. Then, by Kronecker's lemma \cite{Durrett}, we have
$
\lim_{n \to \infty} \frac{1}{b_n} \sum_{n'=1}^n b_{n'} x_{n'} = 0.
$
This shows that 
$
\lim_{n \to \infty} \frac{1}{b_n} \sum_{n'=1}^n \bOne_{E^{n'} } = 0,
$
which results in a contradiction, since $\lim_{n \to \infty} \frac{1}{n \beta^n}$ is finite, 
and hence by Lemma \ref{lem:coupling}, 
$$
\liminf_{n \to \infty} \frac{1}{b_n} \sum_{n'=1}^n \bOne_{E^{n'}} > 0.
$$
\end{proof}

%\begin{proof}[Proof of Theorem \ref{thm1}]
Now we are ready to prove Theorem \ref{thm1}. Consider the probability-$1$ event in Lemma \ref{lem:kronecker}. Let $d_n = \| p^n - p^*\|^2$ and fix $\eps > 0$. We prove that there exists a $n_0(\epsilon)$ such that for all $n \geq n_0(\epsilon)$, $d_n$ has the following properties.
\begin{itemize}
\item [(i)] If $d_n < \epsilon$, then $d_{n+1} < 3\epsilon$.
\item [(ii)] If $d_n \geq \epsilon$ then, $d_{n+1} \leq d_n - \gamma^n$ where $\sum_{n=1}^\infty \gamma^n = \infty$ and $\gamma^n \to 0$. 
\end{itemize}

Then property (ii) shows that for some large enough $n_1 = n_1(\eps)> n_0(\eps)$, $d_{n_1} < \eps$, 
and properties (i) and (ii) show that $d_n < 3 \eps$ for $n \geq n_1(\eps)$. This is true for all $\eps > 0$, so $d_n$ converges to $0$ almost surely.

First we show property (i). 
Let $U^n = [U^n_k] \in \mathbb {R}^K$ be the vector of updates such that 
$$U^{n+1}_k = \bOne_{E^n_k} \sum_{(i',i) \in {\cal H}_k}\Delta Q^{n+1}_{(i',i)}.$$
Note that $\| U^n \|^2$ is bounded by some constant $C_1 > 0$, since the queues length changes at each time slot are bounded by $1$. On the other hand, $\beta^n \to 0$. Thus, one can take $n_1(\eps)$ large enough such that for all $n \geq n_1(\eps)$, $\beta^n \leq \sqrt{\frac{\eps}{2C_1}}$. Then, for $n \geq n_1(\eps)$ if $d_n < \eps$,  
\begin{align}
d_{n+1} &= \| p^{n+1} - p^*\|^2 \\
& = \| [p^n + \beta^n U^{n+1}]_{{\cal C}_\veps} - p^* \|^2 \\ \label{eq:non-expansive}
& \leq \| p^n + \beta^n U^{n+1} - p^*\|^2 \\ \label{eq:CS-ineq}
& \leq 2d_n + 2(\beta^n)^2\| U^{n+1} \|^2 < 3\eps,
\end{align}
where \eqref{eq:non-expansive} is due to the fact that projection to the convex set is non-expansive, 
and \eqref{eq:CS-ineq} is by Cauchy-Schwarz inequality.

To show property (ii), we make essential use of the fact that the cumulative stochastic noise is a martingale. Let 
$$Z^{n+1}_k = \bOne_{E^n_k }(U^n_k - \nu_k + \mu_k p^n_k).$$
Then, by \eqref{eq:unbiased}, 
\begin{equation}\label{eq:MG}
\EE \left[Z^{n+1}_k | {\cal F}^n \right] = 0, ~\forall k
\end{equation}
which shows that $Z^n$ is a martingale difference sequence. Now observe that 
\begin{align}
d_{n+1} &= \sum_{k=1}^K (p^{n+1}_k - p^*_k)^2 \\ \label{gamma1}
& \leq \sum_{k=1}^K (p^n_k + \beta^n U^{n+1}_k - p^*_k)^2 \\
&= d_n + (\beta^n)^2 \| U^{n+1} \|^2 + 2 \beta^n \sum_{k=1}^K (p^n_k - p^*_k)(\nu_k - \mu_k p^n_k+Z^{n+1}_k)\bOne_{E^n_k} \\
&= d_n + (\beta^n)^2 \| U^{n+1} \|^2  + 2 \beta^n \sum_{k=1}^K (p^n_k - p^*_k)(\mu_k (p^*_k-p^n_k)+Z^{n+1}_k)\bOne_{E^n_k}
\\ \label{gamma2}
&\leq d_n + (\beta^n)^2 C_1 - \sum_{k=1}^K 2\mu_k\beta^n \bOne_{E^n}(p^n_k-p^*_k)^2 + \sum_{k=1}^K 2\beta^n Z^{n+1}_k(p^n_k - p^*_k)\bOne_{E^n_k}, 
\end{align} 
where \eqref{gamma1} is due to non-expansiveness of projection, and \eqref{gamma2} is due the facts that $E^n \subset E^n_k$ and $\| U^n \|^2 \leq C_1$. Let $\mu^* = \min_k \mu_k$. Since $\sum_{k=1}^K (p^n_k - p^*_k)^2 > \eps$, the following choice of $\gamma^n$ satisfies $d_{n+1} \leq d_n - \gamma^n$:
\begin{align*}
\gamma^n  = -(\beta^n)^2 C_1 + \beta^n \bOne_{E^n} 2\mu^* \eps - \sum_{k=1}^K 2\beta^n Z^{n+1}_k(p^n_k - p^*_k)\bOne_{E^n_k}.
\end{align*}
As $\beta^n \to 0$ as $n \to \infty$, it is easy to see that $\gamma^n \to 0$ almost surely. Thus, to complete the proof of Theorem \ref{thm1}, one needs to show that $\sum_{n=1}^\infty \gamma_n = \infty$ almost surely. Toward this end, note that $\sum_n (\beta^n)^2$ is finite which makes $-\sum_n (\beta^n)^2 C_1$ bounded. By \eqref{eq:MG}, and the facts that $\sum_n (\beta^n)^2 < \infty$ and $\| p^n - p^*\|$ is bounded for all $n$, we get that 
$$
V^n = \sum_{n'=1}^n \sum_{k=1}^K 2\beta^{n'} Z^{n'+1}_k(p^{n'}_k - p^*_k)\bOne_{E^{n'}_k}
$$
is an $L_2$-bounded martingale and by the martingale convergence theorem converges to some bounded random variable almost surely \cite{Durrett}. Finally, 
$$
2\eps\mu^* \sum_{n=1}^\infty \beta^n \bOne_{E^n} = \infty, a.s.
$$
by Lemma \ref{lem:kronecker}. This completes the proof of Theorem \ref{thm1}.
%\end{proof}

\subsection{Proof of Theorem \ref{thm2}}\label{apdx:proof-thm2}

In this subsection, we provide the proof of Theorem \ref{thm2}. The key idea to prove the rate stability of queues is to first show that the servers allocate enough cumulative capacity to all the tasks in the network. This is formalized in Lemma \ref{LEM:LIMS}. Second, in Lemma \ref{lem:rate}, we show that each queue $(k',k)$ cannot go unstable if task $k$ receives enough service allocation over time, and the traffic rate coming to these queues is nominal. Finally, we use these two conditions to show rate stability of all the queues in the network by mathematical induction.

To prove the theorem, we first introduce some notation. Let $D^n_k$ denote the cumulative number of processed tasks of type $k$ at time $n$. Recall that $d^n_k$ is the number of processed tasks of type $k$ at time $n$. Therefore, $D^n_k=\sum_{n'=1}^n d^{n'}_k$. Let $A^n_m = \sum_{n'=1}^n a^{n'}_m, ~ 1 \leq m \leq M$ be the cumulative number of jobs of type $m$ that have arrived up to time $n$.
Then the queue-length dynamic of queue $(k',k)$ can be written as follows. If $k' \neq 0$, then $Q^n_{(k',k)} = Q^0_{(k',k)} + D^n_{k'} - D^n_k$. If $k' = 0$, then $Q^n_{(k',k)} = Q^0_{(k',k)} + A^n_{m(k)} - D^n_k$.

At time $n$, the probability that one task is served and departed from queue $(k',k)$ is $\mu_k p^n_k$, if all of the queues $(i,k)$ are non-empty for all $i$. We define $s^n_k$ to be a random variable denoting the virtual service that queues $(k',k)$ have received at time $n$, whether there has been an available task $k$ to be processed or not. $s^n_k$ is a Bernoulli random variable with parameter $\mu_k p^n_k$. Then, the cumulative service that queues $(k',k)$ receive up to time $n$ is $S^n_k=\sum_{n'=1}^n s^{n'}_k$ for all $k'$.
Note that the cumulative service is different from the cumulative departure. 
Indeed, $d^n_k=s^n_k \bOne_{E^n_k}$. 

From now on, in the proof of Theorem \ref{thm2}, we consider the probability-$1$ event that $p^n$ converges to $p^*$ stated in Theorem \ref{thm1}.

\begin{lem}\label{LEM:LIMS}
The following equality holds:
\begin{equation}
\lim_{n \to \infty} \frac{S^n_k}{n} = \nu_k, ~a.s., ~\forall k.
\end{equation}
\end{lem}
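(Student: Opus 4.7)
The plan is to leverage Theorem \ref{thm1}, which guarantees that $p^n_k \to p^*_k = \nu_k/\mu_k$ almost surely, and combine it with a martingale argument for the cumulative noise. Throughout, I would work on the probability-$1$ event on which $p^n \to p^*$ (as already fixed in the excerpt immediately before the lemma).

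First, I would write the natural Doob-style decomposition
\begin{align*}
S^n_k \;=\; \sum_{n'=1}^n s^{n'}_k \;=\; \sum_{n'=1}^n \mu_k p^{n'}_k \;+\; \sum_{n'=1}^n \bigl(s^{n'}_k - \mu_k p^{n'}_k\bigr) \;=\; B^n_k + M^n_k,
\end{align*}
where $B^n_k := \sum_{n'=1}^n \mu_k p^{n'}_k$ is the predictable (compensator) part, and $M^n_k := \sum_{n'=1}^n (s^{n'}_k - \mu_k p^{n'}_k)$ is a zero-mean martingale with respect to the natural filtration $\{\mathcal{F}^n\}$, since $p^{n'}_k$ is $\mathcal{F}^{n'-1}$-measurable and $\EE[s^{n'}_k \mid \mathcal{F}^{n'-1}] = \mu_k p^{n'}_k$.

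Next, I would handle each piece separately. For $B^n_k$, since $p^{n'}_k \to p^*_k$ almost surely by Theorem \ref{thm1}, Cesaro averaging immediately yields
\begin{equation*}
\frac{B^n_k}{n} \;=\; \mu_k \cdot \frac{1}{n}\sum_{n'=1}^n p^{n'}_k \;\longrightarrow\; \mu_k p^*_k \;=\; \nu_k \quad \text{a.s.}
\end{equation*}
For $M^n_k$, the martingale differences $s^{n'}_k - \mu_k p^{n'}_k$ are uniformly bounded in absolute value by $1$ (in fact $s^{n'}_k \in \{0, 1\}$ and $\mu_k p^{n'}_k \in [0,1]$). Consequently, $\sum_{n' \geq 1} \frac{\mathrm{Var}(s^{n'}_k - \mu_k p^{n'}_k \mid \mathcal{F}^{n'-1})}{(n')^2} \leq \sum_{n' \geq 1} \frac{1}{(n')^2} < \infty$, so the martingale strong law of large numbers (e.g., Theorem 2.18 in Hall-Heyde, or Corollary 2 in Section 11.2 of \cite{Chow1997}, already invoked earlier in the paper) gives $M^n_k / n \to 0$ almost surely.

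Adding the two limits produces $S^n_k/n \to \nu_k$ almost surely. I do not foresee a serious obstacle here: the only subtlety is making sure the Cesaro step is justified on the probability-$1$ event where $p^n \to p^*$ pointwise, and that the martingale SLLN is applicable given that the compensator $p^{n'}_k$ is only adapted (not deterministic), but boundedness of the increments makes the standard result apply directly.
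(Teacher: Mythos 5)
Your proof is correct, but it takes a genuinely different route from the paper's own argument. The paper establishes the lemma by a \emph{coupling} argument: it fixes $\eps_1 > 0$, finds a (random) time $n_0(\eps_1)$ after which $|\mu_k p^{n}_k - \nu_k| \le \eps_1$, and then couples the non-i.i.d.\ process $\{s^n_k\}$ with two i.i.d.\ Bernoulli processes of rates $\nu_k-\eps_1$ and $\nu_k+\eps_1$ that sandwich $s^n_k$ from below and above; applying the classical strong law of large numbers to each sandwiching sequence and sending $\eps_1 \to 0$ yields the claim. Your argument instead performs the Doob decomposition $S^n_k = B^n_k + M^n_k$ with compensator $B^n_k = \sum_{n'\le n} \mu_k p^{n'}_k$, and handles the predictable part by Ces\`aro averaging (valid since $p^n_k \to p^*_k$ a.s.\ by Theorem~\ref{thm1}) and the martingale part by the martingale strong law of large numbers with bounded increments. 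Both are complete; your version is arguably cleaner because it sidesteps the delicacies of the coupling construction (e.g., the coupling ratio $\frac{\nu_k-\eps_1}{\mu_k p^n_k}$ being in $[0,1]$ only after the random time $n_0(\eps_1)$, and needing a separate $\liminf$/$\limsup$ argument), whereas the coupling approach has the minor advantage of relying only on the classical i.i.d.\ SLLN rather than its martingale extension. One small point to double-check on your side: the predictability claim $\EE[s^{n'}_k \mid \mathcal{F}^{n'-1}] = \mu_k p^{n'}_k$ requires that $p^{n'}_k$ be $\mathcal{F}^{n'-1}$-measurable, which indeed holds here because the update rule \eqref{eq:update2} produces $p^{n'}$ from quantities observable by the end of slot $n'-1$; you correctly flag this and it is not an obstacle.
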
 

\begin{proof} 
By Theorem \ref{thm1}, the sequence $p^n_k$ converges to $\frac{\nu_k}{\mu_k}$ almost surely. Therefore, for all the sample paths in the probability-$1$ event, and for all $\epsilon_1 > 0$, there exists $n_0(\eps_1)$ such that $\|\mu_k p^n_k -  \nu_k \| \leq \epsilon_1$, for all $n > n_0(\eps_1)$. 

Let $\tilde{s}^n_k$ be i.i.d Bernoulli process of parameter $\nu_k - \epsilon_1$. We couple the processes $s^n_k$ and $\tilde{s}^n_k$ as follows. If $s^n_k = 0$, then $\tilde{s}^n_k = 0$. If $s^n_k = 1$, then $\tilde{s}^n_k = 1$ with probability $\frac{\nu_k - \eps_1}{\mu_k p^n_k}$, and $\tilde{s}^n_k = 0$ with probability $1 - \frac{\nu_k - \eps_1}{\mu_k p^n_k}$. Note that $\tilde{s}^n_k$ is still marginally i.i.d Bernoulli process of parameter $\nu_k - \epsilon_1$. Then, 

\begin{align}
\liminf_{n \to \infty} \frac{S^n_k}{n} &\geq \liminf_{n \to \infty} \frac{\sum_{n'=n_0(\eps_1)+1}^n s^{n'}_k}{n} \\ \label{eq:coup1}
&\geq \liminf_{n \to \infty} \frac{\sum_{n'=n_0(\eps_1)+1}^n \tilde{s}^{n'}_k}{n} \\ \label{eq:coup2}
&= \nu_k - \epsilon_1 ~a.s.,
\end{align}
where \eqref{eq:coup1} is by construction of the coupled sequences, and \eqref{eq:coup2} is 
by the strong law of large numbers. 

Let $\bar{s}^n_k$ be i.i.d Bernoulli process of parameter $\nu_k + \epsilon_1$. We couple the processes $s^n_k$ and $\bar{s}^n_k$ as follows. If $s^n_k = 1$, then $\tilde{s}^n_k = 1$. If $s^n_k = 0$, then $\tilde{s}^n_k = 0$ with probability $\frac{1 - (\nu_k + \eps_1)}{1-\mu_k p^n_k}$, and $\tilde{s}^n_k = 1$ with probability $1- \frac{1 - (\nu_k + \eps_1)}{1-\mu_k p^n_k}$. Note that $\bar{s}^n_k$ is still marginally i.i.d Bernoulli process of parameter $\nu_k + \epsilon_1$. Then,

\begin{align}
\limsup_{n \to \infty} \frac{S^n_k}{n} &\leq \limsup_{n \to \infty} \frac{n_0(\eps_1)+\sum_{n'=n_0(\eps_1)+1}^n s^{n'}_k}{n} \\ \label{eq:coup3}
&\leq \limsup_{n \to \infty} \frac{n_0(\eps_1)+ \sum_{n'=n_0(\eps_1)+1}^n \bar{s}^{n'}_k}{n} \\ \label{eq:coup4}
&= \nu_k + \epsilon_1 ~a.s.,
\end{align}
where \eqref{eq:coup3} is by construction of the coupled sequences, and \eqref{eq:coup4} is 
by the strong law of large numbers.
Letting $\epsilon_1 \to 0$, we have
$$
\liminf_{n \to \infty} \frac{S^n_k}{n} =\limsup_{n \to \infty} \frac{S^n_k}{n} = \nu_k, ~ a.s.
$$
\end{proof}

\begin{lem}\label{lem:rate}
Consider a fixed $k$ and all queues $(k',k)$ with $k' \in {\cal P}_k$. Suppose that 
\begin{equation}\label{eq:prop}
\lim_{n \to \infty} \frac{D^n_{k'}}{n} = \nu_k,~ \forall k' \in {\cal P}_k, ~a.s.
\end{equation}
if ${\cal P}_k \neq \emptyset$, and 
\begin{equation}\label{eq:prop2}
\lim_{n \to \infty} \frac{A^n_{m(k)}}{n} = \nu_k, ~a.s.
\end{equation}
if $k$ is a root node. Then, 
$$
\lim_{n \to \infty} \frac{Q^n_{(k',k)}}{n} = 0, a.s.
$$
\end{lem}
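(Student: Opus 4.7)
The plan is to reduce Lemma~\ref{lem:rate} to showing that $D^n_k/n \to \nu_k$ almost surely. Writing $X^n$ for $D^n_{k'}$ in the non-root case and $X^n$ for $A^n_{m(k)}$ in the root case, the queue-length dynamics gives $Q^n_{(k',k)} = Q^0_{(k',k)} + X^n - D^n_k$, and the hypothesis $X^n/n \to \nu_k$ reduces the conclusion $Q^n_{(k',k)}/n \to 0$ to the single claim $D^n_k/n \to \nu_k$ a.s.

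The upper bound $\limsup_n D^n_k/n \leq \nu_k$ is immediate: since $d^n_k = s^n_k \bOne_{E^n_k} \leq s^n_k$, we have $D^n_k \leq S^n_k$, and Lemma~\ref{LEM:LIMS} gives $S^n_k/n \to \nu_k$ a.s. The main obstacle will be the matching lower bound $\liminf_n D^n_k/n \geq \nu_k$, which I plan to establish by contradiction via a backward-time window argument.

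Suppose for contradiction that $c := \liminf_n D^n_k/n < \nu_k$ on an event of positive probability, and extract a subsequence $n_j \to \infty$ with $D^{n_j}_k/n_j \to c$. The identity above then forces $Q^{n_j}_{(k',k)}/n_j \to \nu_k - c > 0$ for every $k' \in \mathcal{P}_k$ (and the analogous statement in the root case). Fixing a small $\alpha \in (0, (\nu_k - c)/2)$ and using that the cumulative counts $D^n_{k'}$, $A^n_{m(k)}$, and $D^n_k$ each increase by at most one per time slot, I propagate the bound $Q^{n_j}_{(k',k)} \geq (\nu_k - c) n_j/2$ backward over the interval $[\lfloor n_j(1-\alpha)\rfloor, n_j]$ to conclude $Q^n_{(k',k)} > 0$ throughout this window, for every parent $k'$ simultaneously. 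Hence $E^n_k$ holds on the entire interval, no virtual service is wasted, and
\[
D^{n_j}_k - D^{\lfloor n_j(1-\alpha)\rfloor}_k \;=\; S^{n_j}_k - S^{\lfloor n_j(1-\alpha)\rfloor}_k.
\]
Dividing by $n_j$ and applying Lemma~\ref{LEM:LIMS}, the right-hand side tends to $\alpha \nu_k$, yielding
\[
\frac{D^{\lfloor n_j(1-\alpha)\rfloor}_k}{\lfloor n_j(1-\alpha)\rfloor} \;\longrightarrow\; \frac{c - \alpha \nu_k}{1 - \alpha} \;<\; c,
\]
where the strict inequality uses $c < \nu_k$. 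This contradicts the definition of $c$ as a $\liminf$, so $\liminf_n D^n_k/n \geq \nu_k$.

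The delicate step is the uniform propagation of positivity backward in time across all parent queues at once; this works precisely because each parent's cumulative input grows by at most one per slot, so shrinking $Q^{n_j}_{(k',k)}$ by at most $\alpha n_j$ keeps it strictly above zero uniformly in $k'$ once $\alpha$ is chosen small. Combining the two bounds yields $D^n_k/n \to \nu_k$ a.s., and substituting back into $Q^n_{(k',k)} = Q^0_{(k',k)} + X^n - D^n_k$ completes the proof.
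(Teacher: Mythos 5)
Your proof is correct and takes a genuinely different route from the paper's. Rather than arguing directly about $Q^n_{(k',k)}/n$, you reduce the conclusion to the single claim $D^n_k/n \to \nu_k$ via the identity $Q^n_{(k',k)} = Q^0_{(k',k)} + D^n_{k'} - D^n_k$, obtain the upper bound $\limsup D^n_k/n \le \nu_k$ for free from $D^n_k \le S^n_k$ and Lemma~\ref{LEM:LIMS}, and establish the matching lower bound by one contradiction argument: along a subsequence $n_j$ realizing the liminf $c < \nu_k$, all parent queues grow linearly, so a backward window $[\lfloor n_j(1-\alpha)\rfloor, n_j]$ (using the bounded-increment property, which holds uniformly over the finitely many parents) keeps $E^n_k$ true throughout, forcing $D^n_k$ to track $S^n_k$ on that window and contradicting the definition of $c$ once $\alpha < (\nu_k - c)/2$. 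The paper instead proves $\liminf_n Q^n_{(k',k)}/n = 0$ and $\limsup_n Q^n_{(k',k)}/n = 0$ by two separate contradiction arguments, with the latter a forward interval-crossing argument on the auxiliary process $B^n_{(k',k)} = D^n_{k'} - S^n_k$ between threshold crossings at $2\eps$ and $4\eps$; it also carries the intermediate step \eqref{eq:qd} showing the parent queues are pairwise rate-balanced. Both rely on Lemma~\ref{LEM:LIMS} and the observation that $d^n_k = s^n_k$ once all parent queues are nonempty. Your version is tighter: a single contradiction replaces two, the backward window is conceptually simpler than the forward crossing-interval bookkeeping, and the explicit reduction to $D^n_k/n \to \nu_k$ makes visible the induction invariant that the paper then uses in the proof of Theorem~\ref{thm2}.
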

\begin{proof}
Before getting to the details of the proof, note that if $k$ is a root node, then we readily know that 
$$\lim_{n \to \infty} \frac{A^n_{m(k)}}{n} = \lambda_m = \nu_k, a.s. $$
Thus, the lemma states that queues $(0,k)$ are rate stable. 
 
We prove the lemma for the general case that node $k$ is not a root node. Similar proof holds for the case of root nodes. First, we show that for all pair of queues $(i,k)$ and $(i',k)$ such that $i,i' \in {\cal P}_k$, we have 
\begin{equation}\label{eq:qd}
\lim_{n \to \infty} \frac{Q^n_{(i,k)} - Q^n_{(i',k)} }{n} = 0, a.s.
\end{equation}

Note that
\begin{align*}
\frac{Q^n_{(i,k)} - Q^n_{(i',k)} }{n}= \frac{D^n_i - D^n_k - (D^n_{i'} - D^n_k)}{n} 
= \frac{D^n_{i'}-D^n_i}{n}.
\end{align*}
Then, by \eqref{eq:prop},
$$
\lim_{n \to \infty} \frac{D^n_i-D^n_{i'}}{n} = \nu_k - \nu_k = 0, a.s.
$$
Second, we show that 
$$
\liminf_{n \to \infty} \frac{Q^n_{(k',k)}}{n} = 0, a.s.
$$
In contrary suppose that in one realization in the probability-$1$ event defined by \eqref{eq:prop} and Lemma \ref{LEM:LIMS},
\begin{equation}\label{eq:liminf}
\liminf_{n \to \infty} \frac{Q^n_{(k',k)}}{n} > 2\eps_2,
\end{equation}
for some $\eps_2 > 0$. This implies that in that realization,
$$
\liminf_{n \to \infty} \frac{Q^0_{(k',k)}+D^n_{k'}-D^n_k}{n} > 2\eps_2.
$$
By \eqref{eq:prop}, the probability that $\lim_{n \to \infty} \frac{Q^0_{(k',k)}+D^n_{k'}}{n} = \nu_k$ is $1$. Thus, in that realization
\begin{equation}\label{eq:liminf2-2}
\limsup_{n \to \infty} \frac{D^n_k}{n} < \nu_k - 2\eps_2.
\end{equation}
On the other hand, \eqref{eq:liminf} shows that there exists $n_0(\eps_2)$ such that for all $n \geq n_0(\eps_2)$, 
\begin{equation}\label{eq:1}
Q^n_{(k',k)} \geq 2n \eps_2.
\end{equation} 
Furthermore, \eqref{eq:qd} shows that there exists $n_1(\eps_2)$ such that for all $i \in {\cal P}_k$ and for all $n\geq n_1(\eps)$, 
\begin{equation}\label{eq:2}
|Q^n_{(k',k)}-Q^n_{(i,k)}| < n\eps_2.
\end{equation} 
Let $n_2(\eps_2) = \max(n_0(\eps_2),n_1(\eps_2))$. \eqref{eq:1} and \eqref{eq:2} imply that for all $n\geq n_2(\eps_2)$, $Q^n_{(i,k)} \geq n \eps_2$. Now taking $n_3(\eps_2) = \max(n_2(\eps_2),1/\eps_2)$, we have that all the queues $(i,k), ~i \in {\cal P}_k$ are non-empty for $n \geq n_3(\eps_2)$. Thus, $s^n_k = d^n_k$ for all $n \geq n_3(\eps_2)$. 
Therefore,
\begin{align*}
\limsup_{n \to \infty} \frac{S^n_k}{n} \leq \limsup_{n \to \infty} \frac{n_3(\eps_2) + D^n_k}{n}  = \limsup_{n \to \infty} \frac{D^n_k}{n}.
\end{align*}
Thus, by Lemma \ref{LEM:LIMS}, $\nu_k \leq \limsup_{n \to \infty} \frac{D^n_k}{n}$ which contradicts \eqref{eq:liminf2-2}. Since this holds for any $\epsilon_2 >0$, we conclude that
\begin{equation}\label{eq:liminf3-2} 
\liminf_{n \to \infty} \frac{Q^n_{(k',k)}}{n} = 0, ~ a.s.,
\end{equation}
for all $k' \in {\cal P}_k$. 

Third, we show that 
$$
\limsup_{n \to \infty} \frac{Q^n_{(k',k)}}{n} = 0, a.s.
$$

In contrary suppose that in one realization in the probability-$1$ event defined by \eqref{eq:prop}, \eqref{eq:qd}, and Lemma \ref{LEM:LIMS},
\begin{equation}\label{eq:limsup}
\limsup_{n \to \infty} \frac{Q^n_{(k',k)}}{n} > 4 \eps_3,
\end{equation}
for some $\eps_3 > 0$. This implies that in that realization $Q^n_{(k',k)} > 4 n \eps_3$ happens infinitely often. Moreover, by \eqref{eq:liminf3-2}, $Q^n_{(k',k)} < 2 n \eps_3$ happens also infinitely often in that realization. On the other hand, by \eqref{eq:qd}, there exists some $n_0(\eps_3)$ such that for all $n \geq n_0(\eps_3)$ and all $i \in {\cal P}_k$, 
\begin{equation}\label{eq:qd2}
|Q^n_{(i,k)}-Q^n_{(k',k)}| < n \eps_3.
\end{equation} 

Take $N_1 = \max(n_0(\eps_3), \frac{2}{\eps_3})$. Then, there exists $N_1 \leq n_1(\eps_3) < n_2(\eps_3)$ such that $Q^{n_1}_{(k',k)} \leq 2 n_1 \eps_3$ and $Q^{n_2}_{(k',k)} \geq 4 n_2 \eps_3$. In words, $n_1 + 1$ is the first time after $N_1$ that $\frac{Q^{n}_{(k',k)}}{n}$ crosses $2 \eps_3$ without going below $2 \eps_3$ before exceeding $4 \eps_3$. Then, since the queue-length changes by at most $1$ each time slot, queue $(k',k)$ is non-empty for all $n, ~n_1 \leq n \leq n_2$. Furthermore, for all $n$, $n_1 \leq n \leq n_2$ and for all $i \in {\cal P}_k$, by \eqref{eq:qd2},
\begin{align*}
Q^n_{(i,k)}  > Q^n_{(k',k)} - n \eps_3 \geq 2n \eps_3 -1  - n\eps_3  \geq n_1 \eps_3 -1 \geq 1.
\end{align*} 
Thus, all the queues $(i,k)$ are also non-empty for all $n$ in the interval $n_1 \leq n \leq n_2$. Consequently, for all $n, ~ n_1 \leq n \leq n_2$, $s^n_k = d^n_k$. Now define a process 
$$B^n_{(k',k)} = D^n_{k'} - S^n_k.$$
Note that by \eqref{eq:prop} and Lemma \ref{LEM:LIMS}, in the realization of probability-$1$ event that we consider, 
\begin{equation}\label{eq:b1}
\lim_{n \to \infty} \frac{B^n_{(k',k)}}{n} = \nu_k - \nu_k = 0.
\end{equation}
We bound $B^{n_2}_{(k',k)}$ as follows. 
\begin{align*}
B^{n_2}_{(k',k)} &= B^{n_1}_{(k',k)} + [B^{n_2}_{(k',k)}-B^{n_1}_{(k',k)}] \\
&= B^{n_1}_{(k',k)} + [D^{n_2}_{k'} - D^{n_1}_{k'} - (S^{n_2}_k - S^{n_1}_k)] \\
&= B^{n_1}_{(k',k)} + [D^{n_2}_{k'} - D^{n_1}_{k'} - (D^{n_2}_k - D^{n_1}_k)] \\
&= B^{n_1}_{(k',k)} + [Q^{n_2}_k - Q^{n_1}_k] \\
& \geq B^{n_1}_{(k',k)} + 4 \eps_3 n_2 - 2 \eps_3 n_1.
\end{align*}
Dividing both sides of the inequality by $n_2$ and subtracting $B^{n_1}_{(k',k)}/n_1$, one gets
$$
\frac{B^{n_2}_{(k',k)}}{n_2} - \frac{B^{n_1}_{(k',k)}}{n_1} \geq \frac{B^{n_1}_{(k',k)}}{n_1}(\frac{n_1}{n_2}-1) + 4\eps_3 - 2\eps_3 \frac{n_1}{n_2}.
$$
By \eqref{eq:b1}, one can choose a large enough $N_2$ such that for all $n \geq N_2$, 
$|\frac{B^n_{(k',k)}}{n}| \leq \frac{2\eps_3}{3}$. Then, $N_1$ can be chosen as 
$$
N_1 = \max(n_0(\eps_3), \frac{2}{\eps_3}, N_2),
$$
and one chooses $n_1$ and $n_2$ accordingly as before. Then, since $n_1, n_2 \geq N_1$, one can write 
\begin{equation}\label{eq:b2}
|\frac{B^n_{(k',k)}}{n} - \frac{B^n_{(k',k)}}{n}| \leq \frac{4\eps_3}{3}.
\end{equation}
However, 
$$
\frac{B^{n_2}_{(k',k)}}{n_2} - \frac{B^{n_1}_{(k',k)}}{n_1} \geq 2 \eps_3(\frac{n_1}{n_2}-1) + 4\eps_3 - 2\eps_3 \frac{n_1}{n_2} = 2\eps_3,
$$
which contradicts \eqref{eq:b2}. Thus, 
$$
\limsup_{n \to \infty} \frac{Q^n_{(k',k)}}{n} = 0, a.s.
$$
The result holds for arbitrary $k' \in {\cal P}_k$. This completes the proof of Lemma \ref{lem:rate}. 
\end{proof}

%\begin{proof}[Proof of Theorem \ref{thm2}]
Now we are ready to prove Theorem \ref{thm2}. We complete the proof of Theorem \ref{thm2} by induction. Recall that $L_k$ is the length of the longest path from the root of the DAG ${\cal G}_{m(k)}$ to  node $k$. If $k$ is a root, $L_k = 0$. The formal induction goes as follows. 
\begin{itemize}
\item \textbf{Basis:} All the queues corresponding to root nodes, i.e. all $(k',k)$ for which $L_k = 0$ are rate stable. 
\item \textbf{Inductive Step:} If all the queues $(k',k)$ for which $L_k \leq L-1$ are rate stable, then all the queues $(k',k)$ for which $L_k = L$ are also rate stable.
\end{itemize}
The basis is true by Lemma \ref{lem:rate}. The inductive step is also easy to show using Lemma \ref{lem:rate}. For a particular queue $(k',k) = (i_L,i_{L+1})$, suppose that $L_k = L$. Pick a path of edges 
$$ 
(i_1,i_2),(i_2,i_3),\ldots,(i_L,i_{L+1}),
$$
from queue $(0,i_1)$ to $(k',k)$. By assumption of induction, all the queues $(i_l,i_{l+1})$ are rate stable for $l \leq L-1$. 
\begin{align*}
A^n_{m(k)} - D^n_{i_L} & =  A^n_{m(k)} - D^n_{i_1} + \sum_{l=1}^{L-1} (D^n_{i_l}-D^n_{i_{l+1}}) \\
& = Q^n_{(0,i_1)} - Q^0_{(0,i_1)} + \sum_{l=1}^{L-1} (Q^n_{(i_l,i_{l+1})}-Q^0_{(i_l,i_{l+1})}).
\end{align*}
Therefore,
\begin{align*}
\lim_{n\to \infty} \frac{D^n_{i_L}}{n} &= \lim_{n\to \infty} \Big[ \frac{A^n_{m(k)}}{n} -\frac{Q^n_{(0,i_1)} - Q^0_{(0,i_1)}}{n} - \sum_{l=1}^{L-1} \frac{(Q^n_{(i_l,i_{l+1})}-Q^0_{(i_l,i_{l+1})})}{n} \Big] \\
& = \lambda_m = \nu_{{k}}, ~a.s.
\end{align*}
Now since $\lim_{n \to \infty} \frac{D^n_{k'}}{n}=\nu_{{k}} ~a.s.$, by Lemma \ref{lem:rate}, $(k',k)$ is rate stable. This completes the proof of the induction step and as a result the proof of Theorem \ref{thm2}.
%\end{proof}

\subsection{Proof of Theorem \ref{thm4}}\label{apdx:proof-thm4}
Let $D^n_k$ denote the cumulative number of tasks that have departed queue $k$ by and including time $n$: $D^n_k = \sum_{n'=1}^n d^{n'}_k$. Define $s^n_k$ to be a random variable denoting the virtual service that queue $k$ receives at time $n$, whether the queue has been empty or not. $s^n_k$ is a Bernoulli random variable with parameter $\mu_k p^n_k$. Note that $d^n_k = s^n_k \bOne_{Q^n_k > 0}$. Define the cumulative service that queue $k$ has received up to time $n$ to be $S^n_k = \sum_{n'=1}^n s^n_k$.  

\begin{lem}\label{lem:FQN}
The following equality holds:
\begin{align}
\lim_{n \to \infty} \frac{\sum_{n'=1}^n s^{n'}_k \bOne^{n'}_{k \to k'}}{n} = \nu_k r_{kk'}, ~a.s., ~\forall k,k'.
\end{align}
\end{lem}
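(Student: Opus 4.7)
The plan is to combine the almost-sure convergence $p^n \to p^*$ from Theorem \ref{thm3} (so that $\mu_k p^n_k \to \nu_k$ a.s.) with a martingale strong law of large numbers, in a manner structurally similar to the proof of Lemma \ref{LEM:LIMS}.

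First, I would set up the filtration by letting $\mathcal{F}^n$ denote the $\sigma$-algebra generated by all arrivals, services, and routing decisions through time $n$. Since the update rule \eqref{eq:FQNupdate} depends only on queue-length data through time $n$, the allocation vector $p^n$ is $\mathcal{F}^{n-1}$-measurable. The virtual service $s^n_k$ is then conditionally Bernoulli with parameter $\mu_k p^n_k$ given $\mathcal{F}^{n-1}$, and the routing indicator $\bOne^n_{k \to k'}$ is independent of the service decision and has mean $r_{kk'}$. Hence
$$
\EE\bigl[s^n_k \bOne^n_{k \to k'} \bigm| \mathcal{F}^{n-1}\bigr] = \mu_k p^n_k r_{kk'}.
$$

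Second, I would define the martingale-difference sequence $Y^n := s^n_k \bOne^n_{k \to k'} - \mu_k p^n_k r_{kk'}$, which is $\mathcal{F}^n$-adapted, has zero conditional mean given $\mathcal{F}^{n-1}$, and is bounded by $1$ in absolute value. The martingale strong law of large numbers (the same one invoked in Lemma \ref{lem:coupling}, i.e., Corollary 2 in Section 11.2 of \cite{Chow1997}) then yields
$$
\lim_{n \to \infty}\frac{1}{n} \sum_{n'=1}^n Y^{n'} = 0 \quad \text{a.s.}
$$

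Third, I would invoke Theorem \ref{thm3}: since $p^n_k \to p^*_k = \nu_k/\mu_k$ a.s., the classical Ces\`aro mean theorem gives
$$
\lim_{n \to \infty}\frac{1}{n}\sum_{n'=1}^n \mu_k p^{n'}_k r_{kk'} = \nu_k r_{kk'} \quad \text{a.s.}
$$
Adding the two limits produces $\frac{1}{n}\sum_{n'=1}^n s^{n'}_k \bOne^{n'}_{k \to k'} \to \nu_k r_{kk'}$ a.s., as claimed.

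The main subtlety, rather than an obstacle, is the careful bookkeeping of the filtration to ensure $p^n$ is $\mathcal{F}^{n-1}$-measurable and that the routing indicator is conditionally independent of the service decision, so that the conditional mean of $s^n_k \bOne^n_{k \to k'}$ factorizes cleanly into $\mu_k p^n_k r_{kk'}$; once this is pinned down, the rest is a standard martingale plus Ces\`aro argument. An alternative route would be to mimic the explicit coupling used in Lemma \ref{LEM:LIMS}, sandwiching $s^n_k \bOne^n_{k \to k'}$ between i.i.d. Bernoulli sequences with parameters $(\nu_k - \veps)r_{kk'}$ and $(\nu_k + \veps)r_{kk'}$ for arbitrary $\veps > 0$ (valid once $n$ is large enough by Theorem \ref{thm3}) and applying the classical strong law.
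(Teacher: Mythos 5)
Your proof is correct, and it takes a genuinely different route from the paper's. The paper proves Lemma \ref{lem:FQN} by the same explicit coupling machinery used in Lemma \ref{LEM:LIMS}: fixing $\eps_4 > 0$, it picks $n_0(\eps_4)$ so that $|\mu_k p^n_k - \nu_k| \le \eps_4$ for $n > n_0(\eps_4)$, then constructs two auxiliary i.i.d.\ Bernoulli processes $w^n_k$ (parameter $(\nu_k-\eps_4)r_{kk'}$, coupled from below) and $v^n_k$ (parameter $(\nu_k+\eps_4)r_{kk'}$, coupled from above), sandwiches the running average of $s^n_k \bOne^n_{k\to k'}$ between the two via the classical SLLN, and lets $\eps_4 \to 0$. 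Your approach instead decomposes $s^n_k \bOne^n_{k\to k'}$ into its conditional mean $\mu_k p^n_k r_{kk'}$ plus a bounded martingale-difference term, applies the martingale SLLN to kill the noise, and uses Ces\`aro on the predictable part. Both are correct; yours is arguably cleaner (no auxiliary processes, no $\eps$-limit at the end) and it also highlights the filtration/measurability structure -- namely that $p^n$ is $\mathcal{F}^{n-1}$-measurable -- which the paper's coupling argument handles only implicitly. The coupling approach, on the other hand, is methodologically uniform with the rest of the paper's appendix and avoids invoking a martingale SLLN on top of the one already used in Lemma \ref{lem:coupling}. You even note the coupling alternative at the end, so you clearly saw both routes.
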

\begin{proof}
First note that the sequence of random variables $\bOne^{n'}_{k \to k'}$ is i.i.d. Bernoulli-distributed with parameter $r_{kk'}$, and independent of the sequence $s^{n'}_k$. Now by Theorem \ref{thm3}, the sequence $\mu_k p^{n'}_k$ converges to $\nu_k$ almost surely. Thus, in this probability-1 event, for all $\epsilon_4 > 0$, there exists $n_0(\epsilon_4)$ such that $\| \mu_k p^{n'}_k - \nu_k \| \leq \epsilon_4$ for all $n' > n_0(\epsilon_4)$. 

Let $w^n_k$ be an i.i.d Bernoulli process of parameter $(\nu_k - \epsilon_4)r_{kk'}$. We couple the processes $s^n_k \bOne^{n}_{k \to k'}$ and $w^n_k$ as follows. If $s^n_k = 0$, then $w^n_k = 0$. If $s^n_k = 1$, then $w^n_k = \bOne^{n}_{k \to k'}$ with probability $\frac{\nu_k - \eps_4}{\mu_k p^n_k}$, and $w^n_k = 0$ with probability $1 - \frac{\nu_k - \eps_4}{\mu_k p^n_k}$. $w^n_k$ is still marginally i.i.d. Bernoulli process of parameter $(\nu_k - \epsilon_4)r_{kk'}$. Then, 
\begin{align*}
\liminf_{n \to \infty} \frac{\sum_{n'=1}^n s^{n'}_k \bOne^{n'}_{k \to k'}}{n}  \geq \liminf_{n \to \infty} \frac{\sum_{n'=n_0(\epsilon_4)+1}^n w^{n'}_k}{n} = (\nu_k - \epsilon_4)r_{kk'} ~a.s.
\end{align*}
Now we couple the processes $s^n_k \bOne^{n}_{k \to k'}$ and $v^n_k$, where $v^n_k$ is an i.i.d Bernoulli process of parameter $(\nu_k - \epsilon_4)r_{kk'}$. If $s^n_k = 1$, then $v^n_k = \bOne^{n}_{k \to k'}$. If $s^n_k = 0$, then $v^n_k = 0$ with probability $\frac{1 - (\nu_k + \eps_4)}{1 - \mu_k p^n_k}$, and $v^n_k = \bOne^{n}_{k \to k'}$ with probability $1 - \frac{1 - (\nu_k + \eps_4)}{1 - \mu_k p^n_k}$. $v^n_k$ is still marginally i.i.d. Bernoulli process of parameter $(\nu_k + \epsilon_4)r_{kk'}$. Then, 
\begin{align*}
\limsup_{n \to \infty} \frac{\sum_{n'=1}^n s^{n'}_k \bOne^{n'}_{k \to k'}}{n} \leq \limsup_{n \to \infty} \frac{\sum_{n'=n_0(\epsilon_4)+1}^n v^{n'}_k}{n} = (\nu_k + \epsilon_4)r_{kk'} ~a.s.
\end{align*}
The proof is complete by letting $\epsilon_4 \to 0$.
\end{proof}
%\begin{proof}[Proof of Theorem \ref{thm4}]
Now we are ready to complete the proof of the theorem. Observe that 
\begin{align*}
Q^n_k &= Q^0_k + A^n_k + \sum_{n'=1}^n \sum_{k'=1}^K d^{n'}_{k'} \bOne^{n'}_{k' \to k} - D^n_k \\
& \leq Q^0_k + A^n_k + \sum_{n'=1}^n \sum_{k'=1}^K s^{n'}_{k'} \bOne^{n'}_{k' \to k}- D^n_k.
\end{align*}
So it is enough to show that 
$$
\lim_{n \to \infty} \frac{A^n_k + \sum_{n'=1}^n \sum_{k'=1}^K s^{n'}_{k'} \bOne^{n'}_{k' \to k}- D^n_k}{n} = 0.
$$
First, we show that 
$$
\liminf_{n \to \infty} \frac{A^n_k + \sum_{n'=1}^n \sum_{k'=1}^K s^{n'}_{k'}\bOne^{n'}_{k' \to k} - D^n_k}{n}=0, ~a.s.
$$ 
In contrary suppose that in a realization, 
$$
\liminf_{n \to \infty} \frac{A^n_k + \sum_{n'=1}^n \sum_{k'=1}^K s^{n'}_{k'}\bOne^{n'}_{k' \to k} - D^n_k}{n}>\epsilon_5,
$$ 
for some $\epsilon_5 >0$. Then, using Lemma \ref{lem:FQN} and the fact that $\lim_{n\to \infty} \frac{A^n_k}{n} = \lambda_k$, we have
\begin{align}\label{eq:FQNliminf}
\limsup_{n \to \infty} \frac{D^n_k}{n} < \lambda_k + \sum_{k'=1}^K \nu_{k'} r_{k'k} - \epsilon_5 = \nu_k - \epsilon_5.
\end{align}
On the other hand, $\liminf_{n \to \infty} \frac{Q^n_k}{n}> \epsilon_5$ implies that there exists $n_0(\epsilon_5) > \frac{1}{\epsilon_5}$ such that for all $n>n_0(\epsilon_5)$, $Q^n_k \geq \epsilon_5 n$, or in words, the queue is non-empty after $n_0(\epsilon_5)$. Thus, $s^{n'}_k = d^{n'}_k$ for $n' > n_0$. Therefore,
\begin{align*}
\limsup_{n \to \infty} \frac{S^n_k}{n}  \leq \limsup_{n \to \infty} \frac{n_0 + D^n_k}{n}  = \limsup_{n \to \infty} \frac{D^n_k}{n}.
\end{align*}
Now by Lemma \ref{LEM:LIMS}, $\limsup_{n \to \infty} \frac{S^n_k}{n} = \nu_k \leq \limsup_{n \to \infty} \frac{D^n_k}{n}$ which contradicts \eqref{eq:FQNliminf}.\footnote{The lemma is also valid for the flexible queueing network, and the proof does not change.} Since this holds for any $\epsilon_5 >0$, we conclude that
\begin{align}\label{eq:FQNliminf1} 
\liminf_{n \to \infty} \frac{Q^n_k}{n} = 0, ~ a.s.
\end{align}
Second, we show that 
$$
\limsup_{n \to \infty}  \frac{A^n_k + \sum_{n'=1}^n \sum_{k'=1}^K s^{n'}_{k'}\bOne^{n'}_{k' \to k} - D^n_k}{n}=0, a.s.
$$
Suppose that in a realization 
$$
\limsup_{n \to \infty} \frac{A^n_k + \sum_{n'=1}^n \sum_{k'=1}^K s^{n'}_{k'}\bOne^{n'}_{k' \to k} - D^n_k}{n}> 2\epsilon_6,
$$ 
for some $\epsilon_6 > 0$. This implies that in this realization, $Q^n_k > 2\epsilon_6 n$ happens infinitely often and
$$
\limsup_{n \to \infty} \frac{A^n_k + \sum_{n'=1}^n \sum_{k'=1}^K s^{n'}_{k'}\bOne^{n'}_{k' \to k} - D^n_k}{n} > 2\epsilon_6
$$
in that realization.
Moreover, by \eqref{eq:FQNliminf1}, for any $\epsilon_6 >0$, $Q^n_k < \epsilon_6 n$ happens infinitely often with probability 1. Let $N_2 \geq \frac{2}{\epsilon_6}$. Then, there exist $N_2 \leq n_3 < n_4$ such that $Q^{n_3}_k \leq \epsilon_6 n_3$ and $Q^{n_4}_k \geq 2 \epsilon_6 n_4$ and queue $k$ is non-empty between times $n_3$ and $n_4$. Define a process
$$
\tB^n_k = A^n_k + \sum_{n'=1}^n \sum_{k'=1}^K s^{n'}_{k'}\bOne^{n'}_{k' \to k} - S^n_k.
$$  
Then,
\begin{align}
\tB^{n_4}_k &= \tB^{n_3}_k + [\tB^{n_4}_k - \tB^{n_3}_k] \\ \label{eq:B1}
& \geq \tB^{n_3}_k + Q^{n_4}_k - Q^{n_3}_k \\ \label{eq:B3}
& \geq \tB^{n_3}_k + 2 \epsilon_6 n_4 - \epsilon_6 n_3.
\end{align}
\eqref{eq:B1} is due to the following. 
\begin{align}
&\tB^{n_4}_k - \tB^{n_3}_k \nonumber \\
& =  A^{n_4}_k - A^{n_3}_k 
+ \sum_{n'=n_3+1}^{n_4} \sum_{k'=1}^K s^{n'}_{k'}\bOne^{n'}_{k' \to k} - (S^{n_4}_k - S^{n_3}_k) \nonumber \\
& \geq  A^{n_4}_k - A^{n_3}_k 
+ \sum_{n'=n_3+1}^{n_4} \sum_{k'=1}^K d^{n'}_{k'} \bOne^{n'}_{k' \to k} - (S^{n_4}_k - S^{n_3}_k) \nonumber \\ \label{eq:B2}
& =  A^{n_4}_k - A^{n_3}_k 
+ \sum_{n'=n_3+1}^{n_4} \sum_{k'=1}^K d^{n'}_{k'} \bOne^{n'}_{k' \to k} - (D^{n_4}_k - D^{n_3}_k) \\
& =  Q^{n_4}_k - Q^{n_3}_k. \nonumber
\end{align}
\eqref{eq:B2} is true since queue $k$ is non-empty between times $n_3$ 
and $n_4$. Now \eqref{eq:B3} implies that 
$$
\frac{\tB^{n_4}_k}{n_4}-\frac{\tB^{n_3}_k}{n_3} \geq \frac{\tB^{n_3}_k}{n_3}(\frac{n_3}{n_4}-1)+2 \epsilon_6 - \epsilon_6 \frac{n_3}{n_4}.
$$
By Lemma \ref{LEM:LIMS} we know that
$$
\lim_{n \to \infty} \frac{\tB^n_k}{n} = 0 ~ a.s., 
$$
so one can choose $N_2$ large enough such that for all $n \geq N_2$, $|\tB^n_k/n | \leq \epsilon_6/3$. Then,
\begin{align}\label{eq:B4}
|\frac{\tB^{n_4}_k}{n_4}-\frac{\tB^{n_3}_k}{n_3} | \leq \frac{2 \epsilon_6}{3}.
\end{align}  
However, since $\frac{\tB^{n_3}_k}{n_3} \leq \epsilon_6$ and $\frac{n_3}{n_4} < 1$,
$$
\frac{\tB^{n_4}_k}{n_4}-\frac{\tB^{n_3}_k}{n_3} \geq \epsilon_6(\frac{n_3}{n_4}-1)+2 \epsilon_6 - \epsilon_6 \frac{n_3}{n_4} = \epsilon_6,
$$
which contradicts \eqref{eq:B4}. Thus, 
$$
\limsup_{n \to \infty}  \frac{A^n_k + \sum_{n'=1}^n \sum_{k'=1}^K s^{n'}_{k'}\bOne^{n'}_{k' \to k} - D^n_k}{n}=0, a.s.,
$$
which completes the proof of Theorem \ref{thm4}.
%\end{proof}

\end{document}